\newtheorem{thm}{Theorem}[section]
\newtheorem{prop}[thm]{Proposition}
\newtheorem{lemma}[thm]{Lemma}
\numberwithin{equation}{section}
\newcommand{\pt}{\partial}
\newcommand{\bp}{\overline{\partial}}
\newcommand{\pal}{\parallel}
\newcommand{\raw}{\rightarrow}
\newcommand{\mf}{\mathcal{F}}
\newcommand{\tr}{\mbox{tr}}
\newcommand{\lag}{\langle}
\newcommand{\rag}{\rangle}
\newcommand{\lmd}{\lambda}
\newcommand{\vps}{\varepsilon}
\newcommand{\vph}{\varphi}
\newcommand{\og}{\omega}
\newcommand{\nla}{\nabla}
\newcommand{\tg}{\triangle}
\newcommand{\wg}{\wedge}
\newcommand{\sqt}{\sqrt{-1}}
\newcommand{\lw}{\Lambda_{\og}}
\newcommand{\ptt}{\frac{\pt}{\pt t}}
\newcommand{\ptA}{\frac{\pt A}{\pt t}}
\newcommand{\mbr}{\mathbb{R}}
\newcommand{\mee}{\mathcal{E}}
\newcommand{\maa}{\mathcal{A}}
\newcommand{\mcc}{\mathcal{C}}
\begin{document}
\title{\textsc{The limiting behaviour of Hermitian-Yang-Mills flow over compact non-K\"ahler manifolds }}
\author{Yanci Nie and Xi Zhang}
\address{Yanci Nie\\School of Mathematical Sciences\\
Xiamen University\\
Xiamen, 361005\\ } \email{nieyanci@xmu.edu.cn}
\address{Xi Zhang\\Key Laboratory of Wu Wen-Tsun Mathematics\\ Chinese Academy of Sciences\\School of Mathematical Sciences\\
University of Science and Technology of China\\
Hefei, 230026,P.R. China\\ } \email{mathzx@ustc.edu.cn}
\subjclass[]{53C07, 58E15}
\keywords{Gauduchon,\ Astheno-K\"ahler,\ Hermitian-Yang-Mills flow}
\thanks{The authors were supported in part by NSF in China,  No.11625106, 11571332, 11526212.}
\maketitle

\begin{abstract}
  In this paper, we analyze the asymptotic behaviour of the Hermitian-Yang-Mills flow over a compact non-K\"ahler manifold $(X,g)$ with the Hermitian metric $g$ satisfying the Gauduchon and Astheno-K\"ahler condition.
\end{abstract}
\section{Introduction}

Let $X$ be an $n$-dimensional compact complex manifold and $g$ a Hermitian metric with associated $(1,1)$-form $\og$. $g$ is called to be Gauduchon if $\og$ satisfies $\pt\bp \og^{n-1}=0$.  It has been proved by Gauduchon (\cite{Gaud}) that if $X$ is compact, there exists a Gauduchon metric in the conformal class of every Hermitian metric $g$. If $\pt\bp\og^{n-2}=0$, the Hermitian metric $g$  is said to be Astheno-K\"ahler  which was introduced by Jost and Yau in \cite{JY93}.

Let $(X,\og)$ be an $n$-dimensional compact complex manifold with $\pt\bp\og^{n-1}=0$ and $(L,h)$  a Hermitian line bundle over $X$.  The $\og$-degree of $L$ is defined by
\begin{equation*}
\deg_{\og}(L):=\int_X c_1(L, A_h)\wedge\displaystyle{\frac{\og^{n-1}}{(n-1)!}},
\end{equation*}
where $c_1(L,A_h)$ is the first Chern form of $L$ associated with the induced Chern connection  $A_h$.  Since $\pt\bp \og^{n-1}=0,$  $\deg_{\og}(L)$ is  well defined and independent of the choice of  metric $h$ (\cite[p.~34-35]{MA}). Now given a rank $s$ coherent analytic sheaf  $\mathcal{F}$, we consider the  determinant line bundle  $\det{\mathcal{F}}=(\wedge^s\mathcal{F} )^{**}$ associated with $\mathcal{F}.$  Define the $\og$-degree of $\mathcal{F}$ by
\begin{equation*}
\deg_{\og}(\mathcal{F}):=\mbox{deg}_{\og}(\det{\mathcal{F}}).
\end{equation*}
If $\mf$ is non-trivial and torsion free, the $\og$-slope of $\mathcal{F}$ is defined by
  $$\mu_{\og}(\mathcal{F})=\frac{\mbox{deg}_{\og}(\mathcal{F})}{\mbox{rank}(\mathcal{F})}.$$

Let $(E,\bp_E)$ be a rank $r$ holomorphic vector bundle over $(X,\og)$. A Hermitian metric $H$ on $E$ is said to be $\og$-Hermitian-Einstein if the Chern curvature $F_H$  satisfies the Einstein condition
 $$ \sqrt{-1}\Lambda_{\og}F_H=\lmd\cdot \mathrm{Id}_{E},$$
 where $\lmd=\displaystyle{\frac{2\pi \mu_{\og}(E)}{Vol(X)}}.$  When the $(1,1)$-form $\og$ is understood, we omit the subscript  $\og$ in the above definitions.

In this paper, we consider the following Hermitian-Yang-Mills flow on the holomorphic bundle $(E,\bp_E)$ with initial data $H(0)=H_0$,
\begin{equation}\label{eq0}
  H^{-1}\frac{\pt H}{\pt t}=-2\left(\sqt\lw F_{H}-\lmd \mbox{Id}_E\right)
\end{equation}
where $\lmd=\displaystyle{\frac{2\pi \mu_{\og}(E)}{Vol(X)}}$ and $F_H$ is the curvature of the Chern connection with respect to $H$. The Hermitian-Yang-Mills flow (\ref{eq0}) was introduced and studied by Donaldson in \cite{Don}. When $(X, \omega )$ is K\"ahler, Donaldson proved  the long time existence and uniqueness of the solution for (\ref{eq0}). Using this flow, Donaldson(\cite{Don87}) obtained the existence of the irreducible Hermitian-Einstein metrics on stable bundles over algebraic manifolds which was extended by Uhlenbeck and Yau (\cite{UhYau}) to the K\"ahler case. On general Hermitian manifolds, the second author (\cite{zhang05}) got the long-time existence and uniqueness of the solution of (\ref{eq0}).

Let's consider the Hermitian vector bundle $(E, H_{0})$. Denote  the space of connections of $E$
compatible with $H_{0}$ by
$\maa_{H_0}$ and the space of unitary integrable connections of $E$  by $\maa_{H_0}^{1,1}$. We denote by $\textbf{G}^{\mathbb{C}}$ (resp. $\textbf{G}$, where $\textbf{G}=\{\sigma \in \textbf{G}^{\mathbb{C}}| \sigma^{\ast H_0}\sigma=\mathrm{Id}\}$) the complex gauge group (resp. unitary gauge group) of the Hermitian
vector bundle $(E, H_{0})$. $\textbf{G}^{\mathbb{C}}$ acts on the space $\maa_{H_0}$ as follows: for
$\sigma \in \textbf{G}^{\mathbb{C}}$ and $A\in \maa_{H_0}$,
\begin{equation}\label{id2}
\overline{\partial }_{\sigma(A)}=\sigma \circ \overline{\partial
}_{A}\circ \sigma^{-1}, \quad \partial _{\sigma (A)}=(\sigma^{\ast
H_{0}})^{-1} \circ \partial _{A}\circ \sigma^{\ast H_{0}}.
\end{equation}

Following Donaldson's argument (\cite{Don}), we can show that the  Hermitian-Yang-Mills flow (\ref{eq0}) is gauge equivalent to the following heat flow
\begin{equation}\label{A.0}
\begin{cases}
\frac{\pt A(t)}{\pt t}=\sqrt{-1}(\bp_A-\pt_A)\Lambda_{\og}F_A,\\
A(0)=A_0,
\end{cases}
\end{equation}
 where $A_0=(\bp_E,H_0)$. In fact, there is a family of complex gauge transformations $\sigma (t)\in \textbf{G}^{\mathbb{C}}$ satisfying
$\sigma(t) ^{\ast H_{0}}\sigma (t)=h (t)=H_{0}^{-1}H(t)$, where $H(t)$ is the long time solution of the Hermitian-Yang-Mills flow (\ref{eq0}) with the initial metric $H_{0}$, such that $A(t)=\sigma (t)(A_{0})$ is the long time solution of the heat flow (\ref{A.0}) with the initial connection $A_{0}$.

 When the underground manifold $(X, \omega )$ is K\"ahler, it is easy to see that the heat flow (\ref{A.0}) is just the Yang-Mills flow by the K\"ahler identity. There are many interesting results on the convergence of the Yang-Mills flow, see references \cite{AtiyahBott, chsh1, chsh2, Daskalopous, DasWent, hong2004asymptotical, Sibley,LZZ3}.
 In this article, we study the limiting behaviour of the Hermitian-Yang-Mills flow (or the heat flow (\ref{A.0})) under the assumption that $\omega $  is Gauduchon and Astheno-K\"ahler. We first give some basic properties of the heat flow (\ref{A.0}) including energy inequality, monotonicity formula of certain quantities and small energy regularity. Then, following the argument of Hong and Tian in (\cite{hong2004asymptotical}) and using Bando and Siu's extension technique, we obtain the following convergence result of the  heat flow (\ref{A.0})

\begin{thm}\label{main}
  Let $(X,\og)$ be an $n$-dimensional compact Hermitian manifold with $\og$ satisfying $\pt\bp\og^{n-1}=\pt\bp\og^{n-2}=0$. Suppose $A(t)$ is the global smooth solution of the heat flow (\ref{A.0}) on the Hermitian vector bundle $(E, H_{0})$ with the initial data $A_0$ over $(X,\og)$. Then

\begin{enumerate}[(1)]
  \item For every sequence $t_k\raw \infty$, there is a subsequence $t_{k_j}$ such that as $t_{k_j}\raw \infty,$ $A(t_{k_j})$ converges modulo gauge transformations to a solution $A_{\infty}$ of  equation
\begin{equation}\label{eq17}
 D_A\lw F_A=0
\end{equation}
on Hermitian vector bundle $(E_{\infty}, H_{\infty})$ in $C^{\infty}_{loc}$ topology outside a subset $\Sigma\subset X$, where $\Sigma$ is a closed set of Hausdorff codimension at least $4$.

\item The limiting $(E_{\infty}, H_{\infty},\bp_{A_{\infty}})$ can be extended to the whole $X$ as a reflexive sheaf with a holomorphic orthogonal splitting
\begin{equation*}
  (E_{\infty}, H_{\infty}, A_{\infty})=\bigoplus_{i=1}^l(E_{\infty}^i,H_{\infty}^i,A_{\infty}^i),
\end{equation*}
where $H_{\infty}^i$ is an admissible Hermitian-Einstein metric on reflexive sheaf $E_{\infty}^i$.
\end{enumerate}

\end{thm}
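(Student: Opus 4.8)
The plan is to adapt the scheme of Hong and Tian for the Yang--Mills flow to the present Hermitian setting: produce a sequence along which the flow subconverges off a small singular set, identify the limit, and then fill in the singularities by the Bando--Siu extension technique. \emph{Step 1 (a good sequence).} The energy inequality for \eqref{A.0} recorded in the preliminary section gives $\int_0^{\infty}\!\int_X |\pt_t A(t)|^2\,\frac{\og^n}{n!}\,dt<\infty$, equivalently $\int_0^{\infty}\!\int_X |D_{A(t)}\lw F_{A(t)}|^2\,\frac{\og^n}{n!}\,dt<\infty$, so one may pick $t_k\raw\infty$ with $\|D_{A(t_k)}\lw F_{A(t_k)}\|_{L^2}\raw 0$. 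The Astheno--K\"ahler condition $\pt\bp\og^{n-2}=0$ makes $\int_X\tr(F_{A(t)}\wg F_{A(t)})\wg\og^{n-2}$ independent of $t$ (the holomorphic type $\bp_{A(t)}$ is fixed up to isomorphism via the complex gauge transformations $\sigma(t)$, and Bott--Chern transgressions are $\pt\bp$--exact, which pairs to zero against $\og^{n-2}$); combined with the pointwise identity relating $|F_A|^2$, $|\lw F_A|^2$ and $\tr(F_A\wg F_A)\wg\og^{n-2}$ for $(1,1)$--curvatures, this yields a uniform bound $\sup_k\int_X|F_{A(t_k)}|^2\,\frac{\og^n}{n!}\le C$.

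\emph{Step 2 (singular set and local convergence).} Let $\vps_0$ be the threshold in the small--energy regularity theorem and set
\[
\Sigma=\bigcap_{0<r<r_0}\Big\{x\in X:\ \liminf_{k\raw\infty}\ r^{4-2n}\!\int_{B_r(x)}|F_{A(t_k)}|^2\,\tfrac{\og^n}{n!}\ \ge\ \vps_0\Big\}.
\]
The monotonicity formula (with its controlled torsion error) makes the scaled energies essentially monotone in $r$, hence $\Sigma$ is closed and $\mhh^{2n-4}(\Sigma)<\infty$, so $\Sigma$ has Hausdorff codimension at least $4$. On any compact $K\subset X\setminus\Sigma$, every point has, for all large $k$, a small ball on which the scaled curvature energy is below $\vps_0$; the regularity theorem then gives uniform $C^{\infty}$ bounds for $F_{A(t_k)}$ on $K$. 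Uhlenbeck's local gauge fixing together with the patching argument produces unitary gauge transformations $g_k$ so that, after passing to a subsequence, $g_k\cdot A(t_k)$ converges in $C^{\infty}_{loc}(X\setminus\Sigma)$ to a connection $A_{\infty}$ on a Hermitian bundle $(E_{\infty},H_{\infty})$ over $X\setminus\Sigma$; since $\|D_{A(t_k)}\lw F_{A(t_k)}\|_{L^2}\raw 0$, the limit solves $D_{A_{\infty}}\lw F_{A_{\infty}}=0$, which is \eqref{eq17}. This proves part (1).

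\emph{Step 3 (splitting and extension).} Put $s=\sqt\lw F_{A_{\infty}}$, a self--adjoint section of $\tend(E_{\infty})$ over $X\setminus\Sigma$. Equation \eqref{eq17} gives $\bp_{A_{\infty}}s=0$, and taking $H_{\infty}$--adjoints gives $\pt_{A_{\infty}}s=0$, so $s$ is parallel; its eigenvalues $\lmd_1<\dots<\lmd_l$ are therefore constant on the connected set $X\setminus\Sigma$ and its eigenbundles yield a $\bp_{A_{\infty}}$--holomorphic, $H_{\infty}$--orthogonal, $A_{\infty}$--parallel decomposition $(E_{\infty},H_{\infty},A_{\infty})=\bigoplus_{i=1}^l(E_{\infty}^i,H_{\infty}^i,A_{\infty}^i)$ with $\sqt\lw F_{A_{\infty}^i}=\lmd_i\,\mathrm{Id}$ on $X\setminus\Sigma$. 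Since $\int_X|F_{A_{\infty}}|^2\,\frac{\og^n}{n!}<\infty$ by Step 1 and lower semicontinuity, and $\Sigma$ has complex codimension at least $2$, the Bando--Siu extension technique applies: each $(E_{\infty}^i,\bp_{A_{\infty}^i},H_{\infty}^i)$ extends across $\Sigma$ to a reflexive sheaf on all of $X$ carrying an admissible Hermitian--Einstein metric, and a Chern--Weil computation on the extension identifies $\lmd_i=2\pi\mu_{\og}(E_{\infty}^i)/Vol(X)$. This proves part (2).

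\emph{Main obstacle.} The two genuinely non--K\"ahler points are obtaining the (almost--)monotonicity formula of Step 2 with a torsion error small enough to keep $\mhh^{2n-4}(\Sigma)<\infty$, and running the Bando--Siu removable--singularity and extension arguments under $\pt\bp\og^{n-2}=0$ in place of the K\"ahler condition; both hinge on the fact that Astheno--K\"ahler is exactly what lets one integrate by parts in the relevant Chern--Weil and Bochner identities, while $\pt\bp\og^{n-1}=0$ is what keeps $\deg_{\og}$, the slopes, and the Einstein constants well defined.
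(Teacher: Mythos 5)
Your overall scheme is the same as the paper's (energy identity, monotonicity formula, small-energy regularity, Uhlenbeck compactness plus a diagonal argument off $\Sigma$, eigen-splitting of $\sqt\lw F_{A_\infty}$, Bando--Siu extension), but there is a genuine gap at the start. The theorem asserts subconvergence for \emph{every} sequence $t_k\raw\infty$, whereas you only select one good sequence along which $\|D_{A(t_k)}\lw F_{A(t_k)}\|_{L^2}\raw 0$, extracted from $\int_0^\infty\!\int_X|D_A\lw F_A|^2\,dV_g\,dt<\infty$. For an arbitrary sequence this decay is not available a priori, and your identification of the limit equation $D_{A_\infty}\lw F_{A_\infty}=0$ relies on it. The paper closes exactly this gap by proving $I(t)=\int_X|D_A\lw F_A|^2\raw 0$ as $t\raw\infty$ (Lemma 2.5), via the differential inequality $\frac{d}{dt}I(t)\leq CI(t)-CI(t)^2$ (whose derivation itself uses the $L^\infty$ bound on $\lw F_A$ and control of the torsion term $(\tau+\bar\tau)^*$) combined with $\int_0^\infty I(t)\,dt\leq \mathrm{YM}(0)$, following the technique of Donaldson--Kronheimer, Prop.\ 6.2.14. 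Without this lemma or a substitute, your argument proves a strictly weaker statement than the one claimed.

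Two further points. First, your Chern--Weil detour to bound $\int_X|F_{A(t_k)}|^2$ is both unnecessary and incomplete as written: under $\pt\bp\og^{n-1}=\pt\bp\og^{n-2}=0$ the torsion contribution to the energy identity integrates to zero (the paper's Prop.\ 2.2), so $\mathrm{YM}(t)+2\int_0^t\int_X|\pt_t A|^2=\mathrm{YM}(0)$ already yields $\mathrm{YM}(t)\leq\mathrm{YM}(0)$ for all $t$; moreover your route would in any case need a bound on $\lw F_A$ (maximum principle and monotone $L^2$ norm, the paper's Lemma 2.4), which you never supply. Second, in your Step 2 you pass directly from the time-slice smallness $r^{4-2n}\int_{B_r(x)}|F_{A(t_k)}|^2<\vps_0$ defining $\Sigma$ to the hypothesis of the $\vps$-regularity theorem, which is a smallness condition on the parabolic cylinder, $R^{2-2n}\int_{P_R}e(A)<\vps_0$; bridging the two requires the local energy estimate (Lemma 2.3) together with the smallness of $\int\!\int|\pt_t A|^2$ near $t_k$, and this is where the choice $\vps_1=\vps_0/4^{n-1}$ is made in the paper. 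Note also that the $\vps$-regularity theorem only gives a sup bound on $|F_A|$; the $C^\infty_{loc}$ convergence needs the further Bochner-type estimates on $\nla_A^lF_A$ and the parabolic mean value inequality before Uhlenbeck gauge fixing and Donaldson's diagonal argument. Your Step 3 (splitting by eigenvalues of the parallel endomorphism $\sqt\lw F_{A_\infty}$ and extension by Bando--Siu) coincides with the paper's argument.
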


The paper is organised as below. In Section 2, we  present some basic properties of the heat flow (\ref{A.0}). In Section 3, we give the detailed proof of Theorem \ref{main}.

\section{Existence of the heat flow and some basic estimates}
Let $(X,\og)$ be an $n$ dimensional compact Hermitian manifold and $(E, H_{0})$ a rank $r$ complex vector bundle over $(X,\og)$.   On the space $\maa_{H_0}$, we define the Yang-Mills functional by
\begin{equation*}
  \rm{YM}(A)=\int_X|F_A|^2\,dV_{\og}.
\end{equation*}
And the negative gradient flow of Yang-Mills functional is
\begin{equation*}
\ptA=-D_A^*F_A,
\end{equation*}
which is called the Yang-Mills flow.

 Using the Taylor expansion method, Demailly (\cite{demailly}) showed that for any $A\in \maa_{H_0}^{1,1}$, it holds
\begin{equation}\label{eq01}
\bp_A^*=-\sqrt{-1}[\Lambda_{\og}, \pt_A]-\overline{\tau}^*,\ \ \ \ \ \ \pt_A^*=\sqrt{-1}[\Lambda_{\og}, \bp_A]-\tau^*,
\end{equation}
where $\tau=[\Lambda_{\og}, \pt \og].$
From (\ref{eq01}), we know that the heat flow (\ref{A.0}) is equivalent to
\begin{equation}\label{A.3}
\begin{cases}
\frac{\pt A(t)}{\pt t}=-D_A^*F_A-[\Lambda_{\og}, dw]^*F_A,\\
A(0)=A_0.
\end{cases}
\end{equation}

Using the result in \cite{zhang05} and following the argument of Donaldson(\cite{Don}), we can obtain the long time  existence and uniqueness of solution of the heat flow (\ref{A.0}).
Since the proof is similar as that in \cite{Don}, we omit it.

\begin{thm}
Let $(X,\og)$ be an $n$-dimensional compact Hermitian manifold and $(E,H_0)$ a rank $r$ Hermitian vector bundle over $X$. Given any $A_0\in \mathcal{A}_{H_0}^{1,1}$, the heat flow (\ref{A.0}) has a unique long-time solution in the complex gauge orbit of $A_0$ with the initial data $A_0$.
\end{thm}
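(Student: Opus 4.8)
The plan is to deduce the statement from the metric form of the flow, equation (\ref{eq0}), whose long-time existence and uniqueness over an arbitrary compact Hermitian manifold is precisely the result of the second author in \cite{zhang05}, and then to carry that information over to the connection flow (\ref{A.0}) by Donaldson's complex gauge device. Attacking (\ref{A.0}) directly would first require adding a gauge-fixing (DeTurck-type) term, since (\ref{A.0}) is parabolic only transversally to the unitary gauge orbits; the device below bypasses this.

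For existence, fix the holomorphic structure $\bp_{A_0}$ determined by $A_0\in\maa_{H_0}^{1,1}$, let $H(t)$ be the long-time solution of (\ref{eq0}) on $(E,\bp_{A_0})$ with $H(0)=H_0$, and set $h(t)=H_0^{-1}H(t)$, a smooth family of positive $H_0$-self-adjoint automorphisms with $h(0)=\mathrm{Id}_E$. I would then produce a smooth family $\sigma(t)\in\textbf{G}^{\mathbb{C}}$ by solving, pointwise on $X$ and in the time variable alone, the linear ODE
\begin{equation*}
\tfrac{\pt\sigma}{\pt t}=-\sigma\bigl(\sqt\lw F_{H(t)}-\lmd\,\mathrm{Id}_E\bigr),\qquad \sigma(0)=\mathrm{Id}_E,
\end{equation*}
whose coefficient is a known smooth endomorphism once $H(t)$ is known, so $\sigma(t)$ exists and is smooth for all $t$. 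Next I would check $\sigma(t)^{\ast H_0}\sigma(t)=h(t)$: since $\sqt\lw F_H-\lmd\,\mathrm{Id}_E$ is $H$-self-adjoint its $H_0$-adjoint is its conjugate by $h$, and a short computation shows $\sigma(t)^{\ast H_0}\sigma(t)$ and $h(t)$ satisfy the \emph{same} linear ODE in $t$ with the same initial value, hence agree. Thus $\sigma(t)^{\ast}H_0=H(t)$, i.e. $\sigma(t)$ is an isometry of Hermitian holomorphic bundles from $(E,\bp_{A_0},H(t))$ onto $(E,\bp_{\sigma(t)(A_0)},H_0)$, which yields $\lw F_{\sigma(t)(A_0)}=\sigma(t)\bigl(\lw F_{H(t)}\bigr)\sigma(t)^{-1}$. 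Finally I would set $A(t):=\sigma(t)(A_0)\in\maa_{H_0}^{1,1}$ and differentiate $A(t)=\sigma(t)(A_0)$ using (\ref{id2}): the $(0,1)$- and $(1,0)$-parts of $\tfrac{\pt A}{\pt t}$ come out as $-\bp_{A(t)}\eta$ and $\pt_{A(t)}\eta^{\ast H_0}$ with $\eta=\tfrac{\pt\sigma}{\pt t}\sigma^{-1}$; the ODE and the displayed identity give $\eta=-\sqt\lw F_{A(t)}+\lmd\,\mathrm{Id}_E$, which is $H_0$-self-adjoint, so
\[
\tfrac{\pt A}{\pt t}=(\pt_{A(t)}-\bp_{A(t)})\eta=\sqt(\bp_{A(t)}-\pt_{A(t)})\lw F_{A(t)},
\]
i.e. $A(t)$ solves (\ref{A.0}) with $A(0)=A_0$, and it lies in the complex gauge orbit of $A_0$ by construction.

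For uniqueness I would run the correspondence backwards. Given another solution $\tilde A(t)=\tilde\sigma(t)(A_0)$ of (\ref{A.0}) in the complex gauge orbit of $A_0$ with $\tilde A(0)=A_0$, I would absorb a stabilizer element to normalise $\tilde\sigma(0)=\mathrm{Id}_E$ and then, following Donaldson, normalise the lift $\tilde\sigma(t)$ along automorphisms of $\bp_{\tilde A(t)}$ so that the metric $\tilde H(t):=H_0\,\tilde\sigma(t)^{\ast H_0}\tilde\sigma(t)$ on $(E,\bp_{A_0})$ solves (\ref{eq0}) with $\tilde H(0)=H_0$; by the uniqueness part of \cite{zhang05}, $\tilde H(t)=H(t)$, and retracing the construction gives $\tilde A(t)=A(t)$.

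The step I expect to be the real work is not the gauge bookkeeping above — which is Donaldson's argument transcribed — but the analytic input behind it: the $C^0$-estimate along (\ref{eq0}), obtained from a maximum principle applied to $\tr h$ and $\tr h^{-1}$, together with the ensuing higher-order parabolic estimates that yield long-time existence and uniqueness of (\ref{eq0}) over a general compact Hermitian manifold. This is exactly what \cite{zhang05} supplies, so, as in \cite{Don}, the remaining details may safely be omitted.
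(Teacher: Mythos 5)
Your proposal is correct and follows essentially the same route the paper intends (and explicitly omits): long-time existence and uniqueness for the metric flow (\ref{eq0}) taken from \cite{zhang05}, then transferred to the connection flow (\ref{A.0}) via Donaldson's complex-gauge construction of $\sigma(t)$ with $\sigma(t)^{\ast H_0}\sigma(t)=h(t)=H_0^{-1}H(t)$, exactly as in \cite{Don}; your ODE for $\sigma(t)$, the verification that $\sigma^{\ast H_0}\sigma$ and $h$ solve the same linear ODE, and the computation showing $A(t)=\sigma(t)(A_0)$ satisfies (\ref{A.0}) are all accurate, and the uniqueness step is the same Donaldson-style reversal the paper points to.
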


\subsection{Basic estimates}

Suppose that $A(t)$ is a smooth solution of the heat flow (\ref{A.0}) and $f$ a real smooth function over $X$. It holds that

\begin{equation}\label{A.2}
\begin{split}
\frac{d}{d t}\int_X f^2 |F_A|^2\, dV_g=&2 \mbox{Re}\int_X \left\langle f^2 F_A, D_A \frac{dA}{dt}\right\rangle\, dV_g\\
=&2\mbox{Re}\left\{\int_X f^2 \Big\lag D_A^*F_A, \frac{\pt A}{\pt t}\Big\rag\, dV_g-\int_X\Big\lag F_A, d f^2\wedge \frac{\pt A}{\pt t}\Big\rag\, dV_g\right\}\\
=&-2\int_X f^2\left|\frac{\pt A}{\pt t}\right|^2\, dV_g-2\mbox{Re}\int_X f^2\Big\lag (\tau+\bar{\tau})^*F_A, \frac{\pt A}{\pt t} \Big\rag\, dV_g\\
&-2\mbox{Re}\int_X\Big\lag F_A, d f^2\wedge \frac{\pt A}{\pt t}\Big\rag\, dV_g.
\end{split}
\end{equation}
Setting $f\equiv 1$ on $X$, we get
\begin{equation}
\begin{split}
\frac{d}{d t} \int_X  |F_A|^2 \, dV_g
=-2\int_X \left|\frac{\pt A}{\pt t}\right|^2 \, dV_g-2\mbox{Re}\int_X\Big\lag(\tau+\bar{\tau})^* F_A, \frac{\pt A}{\pt t}\Big\rag\, dV_g.
\end{split}
\end{equation}

\begin{prop}
  If the fundament form  $\og$ satisfies $\pt\bp\og^{n-1}=\pt\bp\og^{n-2}=0$, there holds
\begin{equation}
  \int_X\Big\lag(\tau+\bar{\tau})^* F_A, \frac{\pt A}{\pt t}\Big\rag\, dV_g=0.
\end{equation}
\end{prop}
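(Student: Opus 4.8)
The plan is to compute $\frac{d}{dt}\int_X|F_A|^2\,dV_g$ a second way and match it against (\ref{A.2}) with $f\equiv 1$. Since $A(t)$ stays in the complex gauge orbit of $A_0$ it remains integrable, so $F_A$ is of type $(1,1)$ and the pointwise algebraic identity
\begin{equation*}
|F_A|^2\,dV_g=|\lw F_A|^2\,dV_g+\tr(F_A\wg F_A)\wg\frac{\og^{n-2}}{(n-2)!}
\end{equation*}
holds. I would show that under $\pt\bp\og^{n-2}=0$ the $t$-derivative of the last term is zero, and that under $\pt\bp\og^{n-1}=0$ the $t$-derivative of $\int_X|\lw F_A|^2\,dV_g$ equals $-2\int_X|\frac{\pt A}{\pt t}|^2\,dV_g$; adding these gives $\frac{d}{dt}\int_X|F_A|^2\,dV_g=-2\int_X|\frac{\pt A}{\pt t}|^2\,dV_g$, and comparison with (\ref{A.2}) forces $\mbox{Re}\int_X\lag(\tau+\overline{\tau})^*F_A,\frac{\pt A}{\pt t}\rag\,dV_g=0$. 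As $(\tau+\overline{\tau})^*F_A$ and $\frac{\pt A}{\pt t}$ are both $\mathrm{ad}(E)$-valued, their pointwise Hermitian pairing is real, so the integral itself vanishes, which is the claim.

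For the term $\int_X\tr(F_A\wg F_A)\wg\frac{\og^{n-2}}{(n-2)!}$: integrability and the Bianchi identity give $\bp_AF_A=\pt_AF_A=0$. Put $\Phi=\sqt\lw F_A$, so $\frac{\pt A}{\pt t}=(\bp_A-\pt_A)\Phi$. Then $\frac{d}{dt}\tr(F_A\wg F_A)=2\,\tr(D_A\frac{\pt A}{\pt t}\wg F_A)=2\,d\,\tr(\frac{\pt A}{\pt t}\wg F_A)$, and since $\bp_AF_A=\pt_AF_A=0$ one may move the derivatives through $F_A$ to get $\tr(\frac{\pt A}{\pt t}\wg F_A)=(\bp-\pt)\tr(\Phi F_A)$, whence $\frac{d}{dt}\tr(F_A\wg F_A)=2(\pt+\bp)(\bp-\pt)\tr(\Phi F_A)=4\,\pt\bp\,\tr(\Phi F_A)$. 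Using the integration by parts $\int_X\pt\bp\beta\wg\gamma=\int_X\beta\wg\pt\bp\gamma$ (valid on the closed manifold $X$ for a $(1,1)$-form $\beta$ and an $(n-2,n-2)$-form $\gamma$), we obtain
\begin{equation*}
\frac{d}{dt}\int_X\tr(F_A\wg F_A)\wg\frac{\og^{n-2}}{(n-2)!}=4\int_X\tr(\Phi F_A)\wg\frac{\pt\bp\,\og^{n-2}}{(n-2)!}=0
\end{equation*}
by the Astheno-K\"ahler hypothesis $\pt\bp\og^{n-2}=0$.

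For the term $\int_X|\lw F_A|^2\,dV_g=\int_X|\Phi|^2\,dV_g$: since $\og$ is $t$-independent, $\frac{\pt\Phi}{\pt t}=\sqt\lw\frac{\pt F_A}{\pt t}=\sqt\lw(\pt_A\bp_A-\bp_A\pt_A)\Phi$. Applying Demailly's identities (\ref{eq01}) to the $(0,1)$-form $\bp_A\Phi$ and the $(1,0)$-form $\pt_A\Phi$, both of which are annihilated by $\lw$, yields
\begin{equation*}
\frac{\pt\Phi}{\pt t}=-(\bp_A^*\bp_A+\pt_A^*\pt_A)\Phi-(\overline{\tau}^*\bp_A+\tau^*\pt_A)\Phi .
\end{equation*}
Pairing with $\Phi$ and integrating, the first bracket gives $-2\int_X(|\bp_A\Phi|^2+|\pt_A\Phi|^2)\,dV_g=-2\int_X|\frac{\pt A}{\pt t}|^2\,dV_g$, while the torsion bracket gives $-\int_X(\lag\bp f,\lw\bp\og\rag+\lag\pt f,\lw\pt\og\rag)\,dV_g$ with $f=|\Phi|^2$: here $\overline{\tau}\Phi=\Phi\cdot\lw\bp\og$ and $\tau\Phi=\Phi\cdot\lw\pt\og$ since $\Phi$ has degree zero, and $\lag\bp_A\Phi,\Phi\cdot\lw\bp\og\rag=\frac12\lag\bp f,\lw\bp\og\rag$ (with the conjugate statement for $\pt$) because $\Phi$ is Hermitian. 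It remains to see that this integral vanishes under $\pt\bp\og^{n-1}=0$: from the primitive decomposition of $\pt\og$ one has $(\lw\pt\og)\wg\og^{n-1}=\pt(\og^{n-1})$, hence the pointwise identity $\lag\bp f,\lw\bp\og\rag\,dV_g=-\sqt\,\bp f\wg\frac{\pt(\og^{n-1})}{(n-1)!}$; integrating by parts, $\int_X\bp f\wg\pt(\og^{n-1})=\int_X f\,\pt\bp\og^{n-1}=0$, and the conjugate term vanishes likewise. Thus $\frac{d}{dt}\int_X|\lw F_A|^2\,dV_g=-2\int_X|\frac{\pt A}{\pt t}|^2\,dV_g$, which combined with the preceding paragraph and the algebraic identity completes the argument outlined in the first paragraph.

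The main obstacle is the third paragraph: carrying Demailly's identities and the integration by parts through carefully, tracking the torsion operators $\tau,\overline{\tau}$, and identifying the residual integral precisely as the Gauduchon quantity $\int_X f\,\pt\bp\og^{n-1}$. The $\tr(F_A\wg F_A)$ computation and the reality remark are routine.
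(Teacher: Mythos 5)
Your proof is correct, but it takes a genuinely different route from the paper's. The paper proves the proposition head-on: it quotes McNamara--Zhao's explicit formulas $\tau^*F_A=-\frac{*(\bp\og^{n-2}\wg F_A)}{(n-2)!}+\frac{*(\bp\og^{n-1}\,\lw F_A)}{(n-1)!}$ (and its conjugate), substitutes $\frac{\pt A}{\pt t}=\sqt(\bp_A-\pt_A)\lw F_A$, and integrates by parts so that the two resulting terms die separately under $\pt\bp\og^{n-2}=0$ and $\pt\bp\og^{n-1}=0$. You instead never touch the structure of $\tau^*$ on $2$-forms: you compute $\frac{d}{dt}\int_X|F_A|^2$ a second way via the pointwise Chern--Weil splitting $|F_A|^2\,dV_g=|\lw F_A|^2\,dV_g+\tr(F_A\wg F_A)\wg\frac{\og^{n-2}}{(n-2)!}$, show the second piece is constant in $t$ under the Astheno-K\"ahler condition and the first piece evolves by exactly $-2\int_X|\frac{\pt A}{\pt t}|^2$ under the Gauduchon condition, and read off the vanishing of the torsion term by comparison with (\ref{A.2}). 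I checked the details: the factor $4\pt\bp\tr(\Phi F_A)$, the reduction $\dot\Phi=-(\bp_A^*\bp_A+\pt_A^*\pt_A)\Phi-(\ol{\tau}^*\bp_A+\tau^*\pt_A)\Phi$ via (\ref{eq01}) applied to the $\lw$-primitive $1$-forms $\bp_A\Phi,\pt_A\Phi$, the identity $\tr(\bp_A\Phi\cdot\Phi)=\tfrac12\bp|\Phi|^2$ for Hermitian $\Phi$, and the conversion $(\lw\pt\og)\wg\og^{n-1}=\pt\og^{n-1}$ are all sound, and the reality of the pairing (both arguments being real $1$-forms valued in skew-Hermitian endomorphisms) legitimately upgrades $\mathrm{Re}\int=0$ to $\int=0$, which the paper also uses implicitly. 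What each approach buys: the paper's is shorter and self-contained given the quoted adjoint formulas; yours is more conceptual, explaining the energy identity as ``topological term frozen by Astheno-K\"ahler $+$ clean parabolic decay of $\|\lw F_A\|_{L^2}^2$ under Gauduchon,'' and your third paragraph in effect re-proves (and corrects a dropped factor of $2$ in) the paper's subsequent lemma that $\frac{d}{dt}\|\lw F_A\|_{L^2}^2=-2\int_X|D_A\lw F_A|^2$. The cost is that you must invoke the Chern--Weil identity for integrable connections and keep $F_A$ of type $(1,1)$, which holds here since $A(t)$ stays in the complex gauge orbit of $A_0$.
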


\begin{proof}

From Proposition 4.1 in \cite{Mcnamara}, we have
\begin{equation}\label{tau}
\tau^*F_A=-\frac{*(\bp(\og^{n-2})\wedge F_A)}{(n-2)!}+\frac{*(\bp(\og^{n-1})\Lambda_{\og}F_A)}{(n-1)!}
\end{equation}
and
\begin{equation}\label{taubar}
\bar{\tau}^*F_A=-\frac{*(\pt(\og^{n-2})\wedge F_A)}{(n-2)!}+\frac{*(\pt(\og^{n-1})\Lambda_{\og}F_A)}{(n-1)!}.
\end{equation}

At first, by (\ref{A.0}),(\ref{tau}) and Stokes formula, we have
\begin{equation*}
\begin{split}
&\int_X\left\lag\tau^* F_A, \frac{\pt A}{\pt t}\right\rag\, dV_g\\
=&\int_X\left\lag \tau^* F_A,\sqrt{-1}\bp_A\Lambda_{\og}F_A \right\rag\, dV_g\\
=&\int_X\left\lag -\frac{*(\bp(\og^{n-2})\wedge F_A)}{(n-2)!}+\frac{*(\bp(\og^{n-1})\Lambda_{\og}F_A)}{(n-1)!}, \sqrt{-1}\bp_A\Lambda_{\og}F_A\right\rag\, dV_g\\
=&\int_X\sqrt{-1}\left\lag \frac{\bp_A^* *(\bp(\og^{n-2})\wedge F_A)}{(n-2)!}, \Lambda_{\og}F_A\right\rag\, dV_g-\int_X\sqrt{-1}\left\lag\frac{\bp_A^**(\bp(\og^{n-1})\Lambda_{\og}F_A)}{(n-1)!}, \Lambda_{\og}F_A \right\rag\, dV_g\\
=:&I+II.
\end{split}
\end{equation*}
Then, by simple calculation, we have
\begin{equation*}
  \begin{split}
  I&=\int_X\sqrt{-1}\Big\lag \frac{\bp_A^* *(\bp(\og^{n-2})\wedge F_A)}{(n-2)!}, \Lambda_{\og}F_A\Big\rag\\
&=\int_X\sqrt{-1}\Big\lag \frac{*\pt_A(\bp(\og^{n-2})\wedge F_A)}{(n-2)!}, \Lambda_{\og}F_A\Big\rag\\
&=\int_X\sqrt{-1}\Big\lag \frac{*(\pt\bp(\og^{n-2})\wedge F_A)}{(n-2)!}, \Lambda_{\og}F_A\Big\rag\\
&=0
\end{split}
\end{equation*}
and
\begin{equation*}
  \begin{split}
  II=&-\int_X\sqrt{-1}\left\lag\frac{\bp_A^**(\bp(\og^{n-1})\Lambda_{\og}F_A)}{(n-1)!}, \Lambda_{\og}F_A \right\rag\, dV_g\\
=&-\int_X \sqrt{-1}\left\lag\frac{*\pt_A(\bp(\og^{n-1})\Lambda_{\og}F_A)}{(n-1)!}, \Lambda_{\og}F_A \right\rag\, dV_g\\
=&-\int_X \sqrt{-1}\left\lag\frac{*(\pt\bp(\og^{n-1})\Lambda_{\og}F_A)}{(n-1)!}, \Lambda_{\og}F_A \right\rag\, dV_g
+\int_X\sqrt{-1}\left\lag \frac{*(\bp \og^{n-1}\wedge \pt_A \Lambda_{\og}F_A)}{(n-1)!}, \Lambda_{\og}F_A \right\rag\, dV_g\\
=&-\int_X \sqrt{-1}\mbox{tr}\left\{\Lambda_{\og}F_A \frac{(\bp \og^{n-1}\wedge \pt_A \Lambda_{\og}F_A)}{(n-1)!}\right\}\\
=&-\int_X\frac{\sqrt{-1}}{2}\pt\mid\Lambda_{\og}F_A\mid^2\wedge \frac{\bp \og^{n-1}}{(n-1)!}\\
=&0.
\end{split}
\end{equation*}

In the same way, we have
\begin{equation*}
\int_X\lag\bar{\tau}^* F_A, \frac{\pt A}{\pt t}\rag\, dV_g=0.
\end{equation*}

\end{proof}
Therefore, there holds that
\begin{lemma}\label{lmm1}
Let $A(t)$ be a solution of the heat flow (\ref{A.0}) with initial data $A_0$ over $X$. Then
\begin{equation}\label{eq6}
\mbox{\rm YM}(t)+2\int_0^t\int_X\left|\frac{\pt A}{\pt t}\right|^2=\mbox{\rm YM}(0).
\end{equation}
\end{lemma}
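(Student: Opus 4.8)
The plan is simply to integrate the first-variation identity in time. Taking $f\equiv 1$ in the computation of $\frac{d}{dt}\int_X f^2|F_A|^2\,dV_g$ carried out above yields
\[
\frac{d}{dt}\,\mathrm{YM}(t)=-2\int_X\Big|\frac{\pt A}{\pt t}\Big|^2\,dV_g-2\,\mbox{Re}\int_X\Big\lag(\tau+\bar{\tau})^*F_A,\frac{\pt A}{\pt t}\Big\rag\,dV_g .
\]
Since $\og$ is assumed to satisfy $\pt\bp\og^{n-1}=\pt\bp\og^{n-2}=0$, the Proposition above shows that the last integral vanishes, so that along the flow
\[
\frac{d}{dt}\,\mathrm{YM}(t)=-2\int_X\Big|\frac{\pt A}{\pt t}\Big|^2\,dV_g .
\]

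Next I would integrate this identity in $t$ from $0$ to $t$. Here one uses that $A(t)$ is the global smooth solution furnished by the long-time existence theorem, so that $t\mapsto\int_X|\pt A/\pt t|^2\,dV_g$ is continuous and differentiation under the integral sign over the compact manifold $X$ is legitimate; this already justifies the differentiated identity displayed above. Integrating then gives $\mathrm{YM}(t)-\mathrm{YM}(0)=-2\int_0^t\int_X|\pt A/\pt t|^2$, which is exactly (\ref{eq6}).

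There is essentially no obstacle at this stage: all the analytic work has already been done, the substantive input being the cancellation of the $\tau$- and $\bar{\tau}$-terms established in the Proposition via McNamara's formulas (\ref{tau})--(\ref{taubar}) and Stokes' theorem, which is precisely where the Gauduchon and Astheno-K\"ahler hypotheses are used. The only point worth a line of comment is that in the K\"ahler case ($d\og=0$) the term $(\tau+\bar{\tau})^*F_A$ vanishes identically and (\ref{eq6}) reduces to the familiar Yang-Mills energy identity; under the weaker assumption here it is the Proposition that plays this role.
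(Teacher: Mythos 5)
Your proof is correct and follows essentially the same route as the paper: it sets $f\equiv 1$ in the variation identity, invokes the Proposition (which uses $\pt\bp\og^{n-1}=\pt\bp\og^{n-2}=0$) to kill the $(\tau+\bar{\tau})^*F_A$ term, and integrates in $t$ using the global smooth solution. Nothing is missing.
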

Let $f$ be a cut-off function with support inside $B_{2R}(x_0)$ and $f\equiv 1$ on $B_R(x_0)$ such that $0\leq f\leq 1$ and $\mid d f\mid\leq 2 R^{-1}$. Set $e(A)=\mid F_A\mid^2.$ From the identity (\ref{A.2}),  we have
\begin{equation}\label{eq6}
\begin{split}
\left| \frac{d}{dt}\int_X f^2 e(A)+2\int_X f^2 \left| \frac{\pt A}{\pt t}\right|^2\right|\leq C\left(\int_X f|df| |F_A|\left|\frac{\pt A}{\pt t}\right|+\int_X f^2|F_A|\left|\frac{\pt A}{\pt t}\right|\right)
\end{split}
\end{equation}
Then from (\ref{eq6}), we can deduce the following local energy estimates:
\begin{lemma}{\rm{(Local energy estimates)}}\label{lmm2}
  Suppose $A(t)$ is a smooth solution of the heat flow (\ref{A.3}). Fix $x_0\in X$ and $R\in \mbr^+$ such that $B_{2R}(x_0)\subset X$. Then for any two finite numbers $s$ and $\tau$, we have
 \begin{equation*}
    \begin{split}
      \int_{B_R(x_0)}e(A)(\cdot, s)\ dV_g\leq &\int_{B_{2R}(x_0)}e(A)(\cdot, \tau)\ dV_g+2\int_{\min\{s,\tau\}}^{\max\{s,\tau\}}\int_X\Big|\ptA\Big|^2\, dV_gdt\\
      &+C\left( \frac{|s-\tau|}{R^2}\rm{YM}(0)\int_{\min\{s,\tau\}}^{\max\{s,\tau\}}\int_X\Big|\ptA\Big|^2\ dV_gdt\right)^{\frac{1}{2}}\\
      &+\left( C|s-\tau|\rm{YM}(0)\int_{\min\{s,\tau\}}^{\max\{s,\tau\}}\int_X \Big|\ptA\Big|^2\right)^{\frac{1}{2}}.
    \end{split}
  \end{equation*}
\end{lemma}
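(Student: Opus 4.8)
The plan is to integrate the differential inequality \eqref{eq6} in time and control each of the resulting error terms using the energy identity of Lemma~\ref{lmm1}. First I would integrate \eqref{eq6} from $\min\{s,\tau\}$ to $\max\{s,\tau\}$; without loss of generality assume $\tau<s$, the other case being symmetric. This immediately yields
\begin{equation*}
\int_X f^2 e(A)(\cdot,s)+2\int_\tau^s\int_X f^2\Big|\ptA\Big|^2\leq \int_X f^2 e(A)(\cdot,\tau)+C\int_\tau^s\int_X f|df|\,|F_A|\Big|\ptA\Big|+C\int_\tau^s\int_X f^2|F_A|\Big|\ptA\Big|.
\end{equation*}
Discarding the (nonnegative) second term on the left and using $f\equiv 1$ on $B_R(x_0)$, $\mathrm{supp}\,f\subset B_{2R}(x_0)$, and $0\le f\le1$ gives the first two terms on the right-hand side of the asserted inequality, namely $\int_{B_{2R}(x_0)}e(A)(\cdot,\tau)+2\int_\tau^s\int_X|\ptA|^2$, plus the two cross-terms to be estimated.

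Next I would bound the two cross-terms by Cauchy--Schwarz in space-time. For the first, write $\int_\tau^s\int_X f|df||F_A||\ptA|\le \big(\int_\tau^s\int_X f^2|df|^2|F_A|^2\big)^{1/2}\big(\int_\tau^s\int_X|\ptA|^2\big)^{1/2}$; since $|df|\le 2R^{-1}$ and $f\le1$, the first factor is at most $\big(4R^{-2}\int_\tau^s\int_X|F_A|^2\big)^{1/2}=\big(4R^{-2}\int_\tau^s\mathrm{YM}(t)\,dt\big)^{1/2}$. By Lemma~\ref{lmm1} the Yang--Mills energy is nonincreasing along the flow, so $\mathrm{YM}(t)\le \mathrm{YM}(0)$ for all $t$, whence $\int_\tau^s\mathrm{YM}(t)\,dt\le |s-\tau|\,\mathrm{YM}(0)$. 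This produces exactly the term $C\big(\frac{|s-\tau|}{R^2}\mathrm{YM}(0)\int_\tau^s\int_X|\ptA|^2\big)^{1/2}$. For the second cross-term, the same Cauchy--Schwarz step with $f\le1$ gives $\int_\tau^s\int_X f^2|F_A||\ptA|\le\big(\int_\tau^s\mathrm{YM}(t)\,dt\big)^{1/2}\big(\int_\tau^s\int_X|\ptA|^2\big)^{1/2}\le\big(|s-\tau|\,\mathrm{YM}(0)\int_\tau^s\int_X|\ptA|^2\big)^{1/2}$, which, after absorbing the constant, is the last term in the statement.

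I expect no serious obstacle here: the estimate is essentially a bookkeeping exercise once \eqref{eq6} is in hand, and the only genuine input beyond elementary inequalities is the monotonicity of $\mathrm{YM}(t)$ from Lemma~\ref{lmm1}. The one point requiring slight care is that \eqref{eq6}, as an inequality on $\big|\frac{d}{dt}\int f^2 e(A)+2\int f^2|\partial_tA|^2\big|$, must be integrated by first noting $\frac{d}{dt}\int f^2 e(A)\le \big|\frac{d}{dt}\int f^2 e(A)+2\int f^2|\partial_tA|^2\big|-2\int f^2|\partial_tA|^2$ (so the sign of the time-derivative term is handled correctly), and that the constant $C$ is allowed to change from line to line and to absorb factors like $4$ coming from $|df|^2\le 4R^{-2}$; the symmetry $s\leftrightarrow\tau$ is what forces the $\min/\max$ notation in the final bound. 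Writing $|s-\tau|$ uniformly in place of $s-\tau$ makes the statement valid regardless of the ordering.
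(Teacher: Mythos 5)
Your proposal is correct and follows essentially the same route as the paper: integrate \eqref{eq6} in time between $\min\{s,\tau\}$ and $\max\{s,\tau\}$, use the cut-off properties of $f$, and control the two cross-terms by Cauchy--Schwarz together with the bound $\mathrm{YM}(t)\le\mathrm{YM}(0)$ coming from Lemma~\ref{lmm1}. The only cosmetic difference is that you apply Cauchy--Schwarz once in space-time where the paper applies it slicewise in space and then in time, which yields the same two error terms.
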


\begin{proof}
 From (\ref{eq6}) and H\"older inequality, we have
\begin{equation*}
     \begin{split}
    &\Big|\int_{\min\{s,\tau\}}^{\max\{s,\tau\}} \left(\frac{d}{dt}\int_X f^2 e(A)+2\int_X f^2 \big| \frac{\pt A}{\pt t}\big|^2 \right)\Big|\\
     \leq &\int_{\min\{s,\tau\}}^{\max\{s,\tau\}}\Big| \frac{d}{dt}\int_X f^2 e(A)+2\int_X f^2 \big| \frac{\pt A}{\pt t}\big|^2 \Big|\\
     \leq &C/R\int_{\min\{s,\tau\}}^{\max\{s,\tau\}}\left(\int_X f^2 e(A)\ dV_g\right)^{\frac{1}{2}}\left(\int_X\Big|\ptA\Big|^2\right)^{\frac{1}{2}}\\
    &+C\int_{\min\{s,\tau\}}^{\max\{s,\tau\}}\left(\int_X f^2 e(A)\ dV_g\right)^{\frac{1}{2}}\left(\int_X\Big|\ptA\Big|^2 \right)^{\frac{1}{2}}\\
     \leq &\left( \frac{C|s-\tau|}{R^2}\rm{YM}(0)\int_{\min\{s,\tau\}}^{\max\{s,\tau\}}\int_X \Big|\ptA\Big|^2\right)^{\frac{1}{2}}\\
    &+  \left( C|s-\tau|\mbox{YM}(0)\int_{\min\{s,\tau\}}^{\max\{s,\tau\}}\int_X \Big|\ptA\Big|^2\right)^{\frac{1}{2}}.
\end{split}
   \end{equation*}
This indicates that
\begin{equation*}
    \begin{split}
       &\int_{B_{R}(x_0)}e(A)(\cdot, s)\ dV_g\\
       \leq& \int_{B_{2R}(x_0)} e(A)(\cdot, \tau)\ dV_g+2\int_{\min\{s,\tau\}}^{\max\{s,\tau\}}\int_X \Big|\ptA\Big|^2\ dV_gdt\\
        &+C\left( \frac{|s-\tau|}{R^2}YM(0)\int_{\min\{s,\tau\}}^{\max\{s,\tau\}}\int_X\Big|\ptA\Big|^2\ dV_gdt\right)^{\frac{1}{2}}\\
      &+\left( C|s-\tau|\mbox{YM}(0)\int_{\min\{s,\tau\}}^{\max\{s,\tau\}}\int_X \Big|\ptA\Big|^2\right)^{\frac{1}{2}}.
    \end{split}
  \end{equation*}
\end{proof}

\begin{lemma}
  Suppose $A(t)$ is a smooth solution of the heat flow (\ref{A.0}). Then it holds that
\begin{equation*}
  \Big(\ptt-2\sqt\lw \pt\bp\Big)|\lw F_A|^2=-2|D_A\lw F_A|^2\leq 0.
\end{equation*}
Furthermore, $\pal \lw F_A\pal_{L^2}^2(t)$ is decreasing along the flow and the $L^{\infty}$ norm of the $\lw F_A$ is bounded.
\end{lemma}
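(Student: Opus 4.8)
The plan is to establish the evolution equation by a direct computation using the heat flow equation and then read off the two consequences. First I would set $u = \lw F_A$ (a section of $\tend(E)$) and compute $\ptt |u|^2 = 2\,\mathrm{Re}\,\lag \ptt u, u\rag$. Since $A(t)$ solves \eqref{A.0}, we have $\ptt A = \sqt(\bp_A - \pt_A) u$, so $\ptt F_A = D_A(\ptt A) = \sqt\, D_A(\bp_A - \pt_A)u$. Applying $\lw$ and using that $\lw$ commutes with $\ptt$, one gets $\ptt u = \sqt\,\lw D_A(\bp_A - \pt_A)u$. The next step is to rewrite $\sqt\,\lw D_A(\bp_A-\pt_A)u$ in terms of $\pt\bp$ and $D_A\lw F_A$: writing $D_A = \pt_A + \bp_A$ and expanding, $\sqt\,\lw(\bp_A\pt_A - \pt_A\bp_A + \bp_A\bp_A - \pt_A\pt_A)u$; the terms $\bp_A^2, \pt_A^2$ contract with the curvature and, together with the $\bp_A\pt_A - \pt_A\bp_A = -[F_A^{1,1},\cdot]$ piece, they should combine so that $\sqt\,\lw D_A(\bp_A-\pt_A)u = 2\sqt\,\lw\pt\bp\, u + (\text{zeroth order terms that vanish})$ — here I expect the Astheno-K\"ahler/Gauduchon hypothesis is not needed for the pointwise identity but the torsion terms $\tau,\bar\tau$ from \eqref{eq01} must be tracked; alternatively one uses the Bochner-type identity $\sqt\lw\bp_A\pt_A - \sqt\lw\pt_A\bp_A$ applied to a $0$-form valued section. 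I would then invoke the standard K\"ahler-identity-type manipulation (valid since $u$ is a $(0,0)$-form with values in $\tend E$) to identify the second-order operator as $2\sqt\lw\pt\bp$ acting on $|u|^2$ via $\sqt\lw\pt\bp|u|^2 = \lag \sqt\lw\pt\bp u, u\rag + \lag u, \sqt\lw\pt\bp u\rag^{-}$-type expansion minus the gradient square term $|D_A u|^2$, which is exactly where the $-2|D_A\lw F_A|^2$ arises.

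More precisely, the cleanest route is: for any $\tend(E)$-valued function $u$,
\begin{equation*}
\sqt\lw\pt\bp\,\tr(u u^{*}) = \tr\big((\sqt\lw\pt\bp u)u^{*}\big) + \tr\big(u(\sqt\lw\pt\bp u)^{*}\big)\cdot(\pm1) - |\pt_A u|^2 - |\bp_A u|^2 + \ldots,
\end{equation*}
so combining with $\ptt|u|^2 = 2\,\mathrm{Re}\,\tr((\ptt u)u^{*})$ and the identity $\ptt u = 2\sqt\lw\pt\bp u$ (modulo torsion), the first-order terms match and one is left with $-2(|\pt_A u|^2 + |\bp_A u|^2) = -2|D_A u|^2 = -2|D_A\lw F_A|^2$. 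Care is needed with the non-K\"ahler torsion terms; I would absorb them using \eqref{eq01} exactly as in the Proposition just proved, noting that $\tau, \bar\tau$ contribute terms that either vanish after the $\pt\bp\og^{n-1} = \pt\bp\og^{n-2} = 0$ assumptions or get folded into the left-hand side $\pt\bp$ operator.

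Given the evolution inequality $(\ptt - 2\sqt\lw\pt\bp)|u|^2 \le 0$, the two furthermore-claims follow by standard parabolic arguments. For the $L^2$ monotonicity: integrate the evolution equation over $X$ against $dV_g$; since $2\sqt\lw\pt\bp$ is (up to lower-order torsion terms controlled by the Gauduchon condition) self-adjoint with respect to $dV_g$ and annihilates constants, $\int_X \sqt\lw\pt\bp|u|^2\,dV_g = 0$ — here the Gauduchon hypothesis $\pt\bp\og^{n-1}=0$ is precisely what makes $\int_X \sqt\lw(\pt\bp\varphi)\,dV_g = 0$ for all functions $\varphi$ — hence $\tfrac{d}{dt}\int_X|u|^2\,dV_g = -2\int_X|D_A\lw F_A|^2\,dV_g \le 0$. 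For the $L^\infty$ bound: since $|u|^2$ is a subsolution of the parabolic operator $\ptt - 2\sqt\lw\pt\bp$ (which has the form $\ptt - (\text{second-order elliptic operator} + \text{first order})$ on a compact manifold), the parabolic maximum principle gives $\max_X |u|^2(t) \le \max_X |u|^2(0)$ for all $t \ge 0$, so $\|\lw F_A\|_{L^\infty}$ is bounded uniformly in $t$ by its initial value. The main obstacle I anticipate is bookkeeping the torsion terms $\tau, \bar\tau$ in the first step so that the second-order part is cleanly $2\sqt\lw\pt\bp$ with no leftover first-order pieces spoiling either the self-adjointness needed for $L^2$ monotonicity or the maximum principle; this is where the Astheno-K\"ahler condition $\pt\bp\og^{n-2}=0$ (beyond just Gauduchon) should be doing its work, exactly as it did in the Proposition above.
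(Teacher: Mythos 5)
Your plan is correct and follows essentially the same route as the paper: compute $\ptt|\lw F_A|^2$ from the flow, expand $\sqt\lw\pt\bp|\lw F_A|^2$ using metric compatibility of the Chern connection so that the same second-order inner-product term appears together with $+|D_A\lw F_A|^2$, then use the maximum principle for the $L^{\infty}$ bound and integrate against $\og^n/n!$, moving $\pt\bp$ onto $\og^{n-1}$ by Stokes so that the Gauduchon condition kills that term, for the $L^2$ decrease. The only caveat is that the torsion bookkeeping you anticipate never arises — neither K\"ahler identities nor the Astheno-K\"ahler condition enter this lemma, since the cancellation happens at the level of inner products rather than through an identity of the form $\ptt\lw F_A=2\sqt\lw\pt\bp\,\lw F_A$ (and your aside $\bp_A\pt_A-\pt_A\bp_A=-[F_A^{1,1},\cdot]$ confuses the commutator with the anticommutator, but it is not needed).
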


\begin{proof}

By simple calculation, we have

\begin{equation*}
\begin{split}
  \ptt |\lw F_A|^2
&=2\mbox{Re}\left\lag \ptt \lw F_A,\lw F_A\right\rag\\
&=2\mbox{Re}\sqt\lw\left\lag  (\pt_A\bp_A-\bp_A\pt_A)\lw F_A,\lw F_A\right\rag
\end{split}
\end{equation*}
and
\begin{equation*}
  \begin{split}
  \sqt\lw \pt\bp|\lw F_A|^2
&=\sqt\lw\pt\lag\bp_A \lw F_A,\lw F_A \rag+\sqt\lw\pt\lag \lw F_A,\pt_A\lw F_A\rag\\
&=\sqt\lw\lag \pt_A\bp_A\lw F_A,\lw F_A\rag+|\bp_A \lw F_A|^2+|\pt_A\lw F_A|^2\\
&\ +\sqt\lw\lag \lw F_A,\bp_A\pt_A\lw F_A\rag\\
&=\mbox{Re}\sqt\lw\lag(\pt_A\bp_A-\bp_A\pt_A)\lw F_A,\lw F_A\rag+|D_A\lw F_A|^2.
\end{split}
\end{equation*}
This implies that
\begin{equation}\label{eq7}
   \Big(\ptt-2\sqt\lw \pt\bp\Big)|\lw F_A|^2=-2|D_A\lw F_A|^2\leq 0.
\end{equation}

Using the maximum principle, we have
$$\sup\limits_X|\lw F_A|^2(\cdot,t)\leq \sup\limits_X|\lw F_A|^2(\cdot,0).$$

Integrating the two sides of (\ref{eq7}) over $X$, we have
\begin{equation*}
\begin{split}
  &\ptt\int_X|\lw F_A|^2\displaystyle{\frac{\og^{n}}{n!}}-2\int_X\sqt\lw\pt\bp |\lw F_A|^2\displaystyle{\frac{\og^{n}}{n!}}\\
=&\ptt\int_X|\lw F_A|^2\displaystyle{\frac{\og^{n}}{n!}}-2\int_X\sqt|\lw F_A|^2\displaystyle{\frac{\bp\pt\og^{n-1}}{(n-1)!}}\\
=&\ptt\int_X|\lw F_A|^2\displaystyle{\frac{\og^{n}}{n!}}=-\int_X|D_A\lw F_A|^2\displaystyle{\frac{\og^{n}}{n!}}\leq 0.
\end{split}
\end{equation*}
Therefore, $\pal \lw F_A\pal_{L^2}^2(t)$ is decreasing along the flow.
\end{proof}

\begin{lemma}
 Suppose $A(t)$ is a smooth solution of (\ref{A.0}) and set
\begin{equation*}
  I(t)=\int_X|D_A\lw F_A|^2.
\end{equation*}
Then it holds that $I(t)\raw 0$ as $t\raw \infty.$
\end{lemma}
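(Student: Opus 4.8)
The plan is to show that $I(t)=\int_X|D_A\lw F_A|^2$ is integrable over $[0,\infty)$ and that it is (almost) decreasing, so that it must tend to $0$. The integrability follows immediately from the preceding lemma: integrating the identity $\ptt\int_X|\lw F_A|^2\,dV_g=-\int_X|D_A\lw F_A|^2\,dV_g$ (the middle term with $\pt\bp\og^{n-1}$ vanishing by the Gauduchon condition) from $0$ to $T$ gives
\begin{equation*}
  \int_0^T I(t)\,dt=\int_X|\lw F_A|^2(\cdot,0)\,dV_g-\int_X|\lw F_A|^2(\cdot,T)\,dV_g\leq \int_X|\lw F_A|^2(\cdot,0)\,dV_g,
\end{equation*}
so $\int_0^\infty I(t)\,dt<\infty$. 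In particular there is a sequence $t_k\raw\infty$ with $I(t_k)\raw 0$, but to upgrade this to convergence of the full family one needs some control on $\frac{d}{dt}I(t)$, or at least a bound that forbids $I$ from oscillating back up.

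The main step, then, is to derive an evolution inequality for $|D_A\lw F_A|^2$, or directly for $I(t)$. Set $u=\lw F_A$; by the previous lemma $u$ satisfies $(\ptt-2\sqt\lw\pt\bp)|u|^2=-2|D_Au|^2$ and $\|u\|_{L^\infty}$ is bounded uniformly in $t$. Differentiating the evolution equation $\ptt u=\sqt\lw(\pt_A\bp_A-\bp_A\pt_A)u$ once more in $t$ and pairing with $D_Au$, together with a Bochner-type computation, should yield an inequality of the schematic form $(\ptt-2\sqt\lw\pt\bp)|D_Au|^2\leq C(1+|F_A|)|D_Au|^2$ modulo lower-order terms controlled by the $L^\infty$ bound on $u$ and the already-established curvature bounds; integrating over $X$ and using the Gauduchon condition to kill the $\pt\bp$-term gives $\frac{d}{dt}I(t)\leq C\,I(t)+ (\text{terms bounded by }\|F_A\|_{L^2}\text{-type quantities})$. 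This is the classical device (as in Hong--Tian and in the Yang-Mills flow literature): once $\int_0^\infty I<\infty$ and $\frac{d}{dt}I\leq C\,I+g(t)$ with $g$ integrable and nonnegative, one concludes $I(t)\raw0$. Concretely: if $I(t_j)$ did not go to $0$ there would be $\vps>0$ and arbitrarily large intervals on which $I\geq\vps/2$ after using the differential inequality to propagate a lower bound a fixed time backwards from a point where $I\geq\vps$, contradicting $\int_0^\infty I<\infty$.

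The hard part will be establishing the differential inequality for $I(t)$ with constants uniform in $t$: this requires the Bochner formula for $D_A\lw F_A$ on a non-K\"ahler Hermitian manifold, where the torsion terms $\tau,\bar\tau$ and the failure of the K\"ahler identities introduce extra first-order terms, and one must check these are absorbed using only the uniform $L^\infty$ bound on $\lw F_A$ (from the maximum principle) together with $\|F_A\|_{L^2}\leq \mathrm{YM}(0)$ and the Gauduchon/Astheno-K\"ahler hypotheses. An alternative, slightly softer route that avoids some of this is to argue by contradiction combined with the small-energy regularity and local energy estimates already proved: if $I(t_j)\geq\vps$ along some sequence while $\int_0^\infty I<\infty$, one extracts a limit flow that is stationary (since $\ptA\raw0$ in $L^2_{loc}$ by Lemma \ref{lmm1}) yet carries nontrivial $D_A\lw F_A$ in the limit, contradicting that the limit must satisfy $D_A\lw F_A=0$; making the extraction rigorous still needs uniform local bounds but sidesteps the global Bochner estimate. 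I would present the first approach, since the requisite curvature bounds are already in hand from the preceding lemmas.
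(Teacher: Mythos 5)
Your overall strategy coincides with the paper's: first show $\int_0^\infty I(t)\,dt<\infty$ (your route via the monotonicity of $\|\lw F_A\|_{L^2}^2$ is fine; the paper instead uses the energy identity of Lemma \ref{lmm1} together with $|\ptA|^2=|(\bp_A-\pt_A)\lw F_A|^2=|D_A\lw F_A|^2$, giving $\int_0^\infty I(t)\,dt<\mathrm{YM}(0)$), and then combine it with a differential inequality $\frac{d}{dt}I\leq CI$ and the Donaldson--Kronheimer device (Prop.~6.2.14 in \cite{DonKro}) to conclude $I(t)\raw 0$. However, the differential inequality is precisely the content of the proof, and you have only asserted it ``should'' follow; moreover the route you sketch for it has a concrete obstruction. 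A pointwise Bochner/parabolic inequality for $|D_A\lw F_A|^2$ inevitably produces curvature commutator terms of size $|F_A|\,|D_A\lw F_A|^2$ (this is where your factor $(1+|F_A|)$ comes from), and along this flow only $\lw F_A$ is bounded in $L^\infty$ by the maximum principle, while the full curvature $F_A$ is controlled only in $L^2$ by $\mathrm{YM}(0)$. After integrating over $X$ you are left with $\int_X(1+|F_A|)\,|D_A\lw F_A|^2$, which cannot be absorbed into $C\,I(t)+g(t)$ with $g$ integrable using the bounds actually in hand; so the inequality you need is not established by the argument you outline.

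The paper sidesteps this by never writing a pointwise formula: it differentiates $D_A\lw F_A$ in $t$ directly, using $\ptA=\sqt(\bp_A-\pt_A)\lw F_A$, so that $\frac{d}{dt}D_A\lw F_A=[\sqt(\bp_A-\pt_A)\lw F_A,\lw F_A]+D_A\lw D_A\sqt(\bp_A-\pt_A)\lw F_A$ involves only $\lw F_A$ and its covariant derivatives, never the full curvature. Pairing with $D_A\lw F_A$ and integrating by parts gives a commutator term bounded by $C(n,\mathrm{rank}\,E,\|\lw F_A\|_{L^\infty})\,I(t)$, a good term $-2\int_X|D_A^*D_A\lw F_A|^2$, and a torsion term bounded by $\int_X|D_A^*D_A\lw F_A|^2+C\,I(t)$; hence $\frac{dI}{dt}\leq C\,I(t)$ (indeed $\leq CI-CI^2$, using Cauchy--Schwarz in the form $I(t)^2\leq\|\lw F_A\|_{L^\infty}^2\,\mathrm{Vol}(X)\int_X|D_A^*D_A\lw F_A|^2$), and the DK lemma applies. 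If you want to rescue your version, you should redo the computation at the level of $\frac{d}{dt}I(t)$ as above rather than via a pointwise Bochner formula. Finally, be careful with your proposed ``softer'' fallback: in the paper's logic the fact that the Uhlenbeck limit satisfies $D_{A_\infty}\lw F_{A_\infty}=0$ is deduced \emph{from} this lemma, so invoking that property of the limit to prove $I(t)\raw 0$ would be circular unless you supply an independent argument, and the uniform local bounds needed for the extraction are only available away from the singular set.
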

\begin{proof}
From equation (\ref{A.3}), we have
\begin{equation*}
  \begin{split}
   \frac{d}{dt}D_A\lw F_A
&=\Big[\ptA,\lw F_A\Big]+D_A\lw \frac{\pt F_A}{\pt t}\\
&=\Big[\sqt(\bp_A-\pt_A)\lw F_A,\lw F_A\Big]+D_A\lw D_A\ptA\\
&=\Big[\sqt(\bp_A-\pt_A)\lw F_A,\lw F_A\Big]+D_A\lw D_A\sqt(\bp_A-\pt_A)\lw F_A.
\end{split}
\end{equation*}
So
\begin{equation*}
  \begin{split}
  \frac{d}{dt}I(t)
=&2\mbox{Re}\int_X\left\lag \frac{d}{dt}(D_A\lw F_A), D_A\lw F_A \right\rag\\
=&2\mbox{Re}\int_{X}\left\lag [\sqt(\bp_A-\pt_A)\lw F_A,\lw F_A],D_A\lw F_A \right\rag\\
&+2\mbox{Re}\int_X\left\lag D_A\lw D_A\sqt(\bp_A-\pt_A)\lw F_A,D_A\lw F_A \right\rag\\
=&2\mbox{Re}\int_{X}\left\lag [\sqt(\bp_A-\pt_A)\lw F_A,\lw F_A],D_A\lw F_A \right\rag\\
&-2\int_X|D_A^*D_A\lw F_A|^2-2\mbox{Re}\int_X\left\lag (\tau+\bar{\tau})^*D_A\lw F_A, D_A^*D_A\lw F_A \right\rag.
\end{split}
\end{equation*}
At first, one can easily check that
\begin{equation*}
  \mbox{Re}\int_{X}\lag [\sqt(\bp_A-\pt_A)\lw F_A,\lw F_A],D_A\lw F_A \rag\leq C(n,\mbox{rank}E, \pal \lw F_A\pal_{L^{\infty}})I(t).
\end{equation*}
Then, it holds
\begin{equation}\label{eqa1}
\begin{split}
  \int_X\lag D_A\lw F_A, D_A\lw F_A\rag&=\int_X\lag \lw F_A, D_A^*D_A\lw F_A \rag\\
&\leq \pal\lw F_A\pal_{L^{\infty}}\int_X |D_A^*D_A\lw F_A|\\
&\leq \pal\lw F_A\pal_{L^{\infty}}\mbox{Vol}(X)^{1/2}\left(\int_X  |D_A^*D_A\lw F_A|^2\right)^{1/2}.
\end{split}
\end{equation}
Inequality (\ref{eqa1}) implies
\begin{equation*}
  I(t)^2\leq \pal\lw F_A\pal^2_{L^{\infty}}\mbox{Vol}(X)\int_X  |D_A^*D_A\lw F_A|^2.
\end{equation*}

At last, it is easy to check that
\begin{equation*}
  \begin{split}
  \int_X\lag (\tau+\bar{\tau})^*D_A\lw F_A, D_A^*D_A\lw F_A \rag
&=\int_X\lag D_A\lw F_A,\lw d\og\wg D_A^*D_A\lw F_A \rag\\
&\leq  \int_X| D_A^*D_A\lw F_A|^2+C^2I(t).
\end{split}
\end{equation*}
From the above all, we have
\begin{equation*}
  \begin{split}
  \frac{d I(t)}{dt}
&\leq C I(t)-\int_X  |D_A^*D_A\lw F_A|^2+C^2 I(t)\\
&\leq C I(t)-CI(t)^2.
\end{split}
\end{equation*}
From equality (\ref{eq6}) and
\begin{equation*}
\int_X\Big|\ptA\Big|^2=\int_X|(\bp_A-\pt_A)\lw F_A|^2=\int_X|D_A\lw F_A|^2,
\end{equation*}
we have
\begin{equation*}
  \int_0^{\infty}I(t)<YM(0).
\end{equation*}
Using the technique in \cite[Prop.~6.2.14]{DonKro}, we have $I(t)\raw 0$ as $t\raw \infty.$

\end{proof}

\begin{lemma}\label{bochner}
  Suppose $A(t)$ is a global smooth solution of heat flow (\ref{A.0}). Then it holds that
\begin{equation*}
  (\tg-\ptt)|F_A|^2\geq 2|\nla_A F_A|^2-C(1+|F_A|+|Ric|+|Rm|)|F_A|^2-C|F_A||\nla_A F_A|,
\end{equation*}
where $C$ is a positive constant depending on the geometry of $X$.
\end{lemma}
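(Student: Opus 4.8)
The plan is to derive the Bochner-type inequality by a direct Weitzenböck computation for the curvature $F_A$ along the heat flow, keeping careful track of the error terms coming from the non-Kähler structure. First I would compute $\ptt |F_A|^2 = 2\,\mathrm{Re}\lag \ptt F_A, F_A\rag$ using the evolution equation $\ptt A = \sqt(\bp_A-\pt_A)\lw F_A$, which gives $\ptt F_A = D_A \ptt A = \sqt D_A(\bp_A-\pt_A)\lw F_A$. On the other side, I would compute the rough Laplacian $\tg |F_A|^2 = 2\,\mathrm{Re}\lag \tg_A F_A, F_A\rag + 2|\nla_A F_A|^2$, where $\tg_A$ denotes the connection (Bochner) Laplacian on bundle-valued forms. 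The crucial point is to relate $\tg_A F_A$ to the expression appearing in $\ptt F_A$. Since $A(t)\in\maa_{H_0}^{1,1}$ (the $(1,1)$ property is preserved by the flow), we have $F_A = F_A^{1,1}$, and the second Bianchi identity $D_A F_A = 0$ together with the Demailly-type formulas (\ref{eq01}) for the adjoints $\bp_A^*, \pt_A^*$ should let me rewrite $D_A^*F_A = -D_A^*F_A$ in terms of $\lw F_A$ plus torsion corrections $\tau, \bar\tau$. Then the Yang-Mills-type flow equation (\ref{A.3}), namely $\ptA = -D_A^*F_A - [\lw, d\og]^*F_A$, identifies the parabolic operator.

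The heart of the computation is then a Weitzenböck identity of the schematic form
\begin{equation*}
  (D_A D_A^* + D_A^* D_A) F_A = \tg_A F_A + \mathrm{Rm}*F_A + \mathrm{Ric}*F_A + F_A * F_A,
\end{equation*}
valid on a general Riemannian manifold, where the $*$ denotes metric contractions and the curvature-of-$X$ and curvature-of-$E$ terms arise from commuting covariant derivatives. Combining this with $D_A F_A = 0$ (so $D_A^* D_A F_A$ is the only surviving second-order term) and with the flow equation, one gets
\begin{equation*}
  \ptt F_A = -\tg_A F_A + \mathrm{Rm}*F_A + \mathrm{Ric}*F_A + F_A*F_A + (\text{torsion terms}),
\end{equation*}
and the torsion terms, built from $d\og$ and its derivatives, contribute a contribution bounded by $C(1+|F_A|)|F_A| + C|F_A||\nla_A F_A|$ after using that $\|\lw F_A\|_{L^\infty}$ is bounded by the previous lemma and that the geometry of $X$ (hence $d\og$, $\nla d\og$) is fixed. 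Pairing with $F_A$ and using $\tg|F_A|^2 = 2\,\mathrm{Re}\lag \tg_A F_A, F_A\rag + 2|\nla_A F_A|^2$ yields
\begin{equation*}
  (\tg - \ptt)|F_A|^2 = 2|\nla_A F_A|^2 + 2\,\mathrm{Re}\lag (\text{zeroth + torsion terms}), F_A\rag,
\end{equation*}
and estimating the right-hand side from below with Cauchy–Schwarz (and Kato where convenient) gives exactly the claimed inequality, the $|F_A|^3$ contribution being absorbed into $C|F_A||F_A|^2$.

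I expect the main obstacle to be bookkeeping: on a non-Kähler manifold the various adjoint operators $\bp_A^*$, $\pt_A^*$, $D_A^*$ differ from their formal Kähler analogues by the torsion operators $\tau = [\lw,\pt\og]$ and $\bar\tau$, so converting between the ``heat flow form'' (\ref{A.0}) and the ``Yang-Mills form'' (\ref{A.3}) of the equation, and then feeding that into the Weitzenböck identity, produces a proliferation of first-order-in-$\nla_A F_A$ and zeroth-order-in-$F_A$ terms that must all be shown to be of the stated type. The key simplifications that make this tractable are: (i) the connection stays in $\maa_{H_0}^{1,1}$ so $F_A$ is a $(1,1)$-form and $\lw F_A = \sqt$ times the trace, letting $[\lw, d\og]^*F_A$ be handled via the explicit formulas (\ref{tau})–(\ref{taubar}); (ii) $\|\lw F_A\|_{L^\infty}$ is uniformly bounded along the flow by the earlier lemma, so terms involving $\lw F_A$ can be absorbed into the $C(1+|F_A|)|F_A|^2$ term; and (iii) all quantities depending only on $(X,\og)$ — namely $\mathrm{Rm}$, $\mathrm{Ric}$, $d\og$, $\nla d\og$ — are bounded by fixed geometric constants. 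With these in hand the inequality follows without needing the Astheno-Kähler or Gauduchon hypotheses, which is consistent with the lemma statement only invoking ``the geometry of $X$.''
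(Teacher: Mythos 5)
Your proposal follows essentially the same route as the paper: rewrite the flow in the Yang--Mills-plus-torsion form (\ref{A.3}) with $\alpha=[\lw,d\og]^*F_A$, combine the Bochner formula $\tg|F_A|^2=-2\lag\nla_A^*\nla_AF_A,F_A\rag+2|\nla_AF_A|^2$ with the Weitzenb\"ock identity and the Bianchi identity $D_AF_A=0$, and estimate the curvature and torsion terms (including $D_A\alpha$, which produces the $C|F_A||\nla_AF_A|$ term) by Cauchy--Schwarz. The only cosmetic remarks are the evident slip ``$D_A^*F_A=-D_A^*F_A$'' and that the $L^\infty$ bound on $\lw F_A$ is not actually needed, since $[\lw,d\og]^*F_A$ is already pointwise bounded by $C|F_A|$; neither affects the argument.
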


\begin{proof}

First, using Bochner technique, we have
\begin{equation*}
  \tg|F_A|^2=-2\lag \nla_A^*\nla_A F_A,F_A \rag+2|\nla_A F_A|^2.
\end{equation*}
Then by simple calculation, we have
\begin{equation*}
    \begin{split}
      \ptt F_A
      &=D_A\ptA=-D_AD_A^*F_A-D_A\alpha,
    \end{split}
  \end{equation*}
where $\alpha=[\lw,d\og]^*F_A$. Combining with the following Weitzenb\"ock formula
\begin{equation*}
  \tg_A F_A=-D_AD_A^* F_A=\nla_A^*\nla_A F_A+Ric\sharp F_A+F_A\sharp F_A,
\end{equation*}
we have
\begin{equation*}
  \ptt F_A=-\nla^*_A\nla_AF_A-Ric\sharp F_A-F_A\sharp F_A-D_A\alpha.
\end{equation*}
Therefore,
\begin{equation*}
\begin{split}
  (\tg-\ptt)|F_A|^2
&=2|\nla_A F_A|^2+2\lag Ric\sharp F_A+F_A\sharp F_A+D_A\alpha, F_A \rag\\
&\geq 2|\nla_A F_A|^2-C(1+|Ric|+|F_A|)|F_A|^2-C|\nla_A F_A||F_A|,
\end{split}
\end{equation*}
where $C$ is a constant depending on the geometry of $X$.
\end{proof}
\subsection{Monotonicity formula}

 Let $(X,g)$ be an $n$-dimensional compact Hermitian manifold  with fundamental $(1,1)$-form $\og$ satisfying $\pt\bp\og^{n-1}=\pt\bp\og^{n-2}=0.$ We regard $X$ as a $2n$-dimensional Riemannian manifold. For any $x_0\in X$, there exist normal geodesic coordinates $\{x_i\}^{2n}_{i=1}$ in the geodesic ball $B_{r}(x_0)$ centered at $x_0$ with radius $r\leq i_{X}$ such that $x_0=(0,...,0)$ and
$$\mid g_{ij}(x)-\delta_{ij}\mid\leq C(x_0)\mid x \mid^2, \ \ \ \left|\frac{\pt g_{ij}}{\pt x_k}\right| \leq C(x_0)\mid x\mid\ \ \forall x\in B_r,$$
where $i(X)$ is the  infimum of the injectivity radius over $X$ and $C(x_0)$ a positive constant depending on $x_0$.

Let $u=(x,t)$ be a point in $X\times \mbr$. For a fixed point $u_0=(x_0, t_0)\in X\times \mbr^+$, we write
\begin{equation*}
\begin{split}
&S_r=X\times \{t=t_0-r^2\},\\
&T_r=\{u=(x,t):t_0-4r^2\leq t\leq t_0-r^2, x\in X\},\\
&P_r(u_0)=B_r(x_0)\times [t_0-r^2,t_0+r^2].
\end{split}
\end{equation*}
For simplicity, we denote $ S_r(0,0),$ $T_r(0,0)$ and $P_r(0,0)$ by $S_r,$ $T_r$ and $P_r$ respectively.

The fundamental solution of the (backward) heat equation with singularity at $(x_0,t_0)$ is
$$ G_{(x_0,t_0)}(x,t)=\frac{1}{(4\pi(t_0-t))^{2n}}\exp{\left(-\frac{|x-x_0|^2}{4(t_0-t)}\right)},\ \  t\leq t_0.$$
For simplicity, we denote $G_{(0,0)}(x,t)$ by $G(x,t).$

Assume that $A(t)$ is a smooth global solution of the heat flow (\ref{A.0}) in $X\times \mbr^+.$ Let $f$ be a smooth cut-off function such that $|f|\leq 1,$ $f\equiv 1$ on $B_{R/2}$, $f=0$ outside $B_{R}$ and $|\nabla f|\leq 2/R,$ where $R\leq i_X.$ For any $(x,t)\in X\times [0,+\infty),$ we set
$$\Phi(r)=r^2\int_{T_r(u_0)} e(A)f^2G_{u_0}\ dt$$
Then we have
\begin{thm}
Assume that $A(t)$ is a solution of the heat flow (\ref{A.0}) in $X\times R_+$ with initial data $A_0.$ Let $f$ be a smooth cut-off function such that $|f|\leq 1,$ $f\equiv 1$ on $B_{R/2}$, $f=0$ outside $B_{R}$ and $|\nabla f|\leq 2/R,$ where $R\leq i(X).$ Then for any $r_1$ and $r_2$ with $0<r_1\leq r_2\leq \min\{R/2, \sqrt{t_0}/2\},$ we have
\begin{equation*}
\Phi(r_1)\leq C\exp(C(r_2-r_1))\Phi(r_2)+C(r_2^2-r_1^2)YM(0)+CR^{2-2n}\int_{P_R(u_0)}|F_A|^2\ dV_gdt.
\end{equation*}
\end{thm}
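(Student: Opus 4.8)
The plan is to derive an almost‑monotonicity inequality for $\Phi$ in the variable $r$ and then integrate it, following the monotonicity computations of Hamilton, Chen--Shen and Hong--Tian for the Yang--Mills flow, the non‑K\"ahler torsion term and the curvature of the base appearing only as controlled error terms. First I would fix $u_0=(x_0,t_0)$ and work in the normal geodesic coordinates $\{x_i\}$ around $x_0$ set up above, in which $|g_{ij}-\delta_{ij}|\le C|x|^2$ and $|\pt g_{ij}/\pt x_k|\le C|x|$ on $B_R$. There $G_{u_0}$ agrees with the Euclidean backward Gaussian up to a factor $1+O(|x|^2)$, and on $\mathrm{supp}\,f$ one may use, up to errors of size $|x|^2(t_0-t)^{-1}G_{u_0}$ and $|x|G_{u_0}$, the relations $\pt_tG_{u_0}+\tg G_{u_0}=0$ and $\nabla G_{u_0}=-\tfrac{1}{2(t_0-t)}\,\xi\,G_{u_0}$, where $\xi=\sum_i x_i\,\pt/\pt x_i$ is the radial vector field; these are what drive the computation.

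Next I would differentiate $\Phi(r)=r^2\int_{T_r(u_0)}e(A)f^2G_{u_0}\,dV_g\,dt$: the dependence on $r$ sits only in the prefactor $r^2$ and in the two time‑hyperplanes $\{t=t_0-r^2\}$, $\{t=t_0-4r^2\}$ bounding $T_r$. Rewriting the resulting slice integrals as a time‑integral over $T_r$ of $\pt_t(e(A)f^2G_{u_0})$, I would substitute the flow equation $\pt_tA=-D_A^*F_A-\alpha$ with $\alpha=[\lw,d\og]^*F_A$ (equation (\ref{A.3})), use the Bianchi identity $D_AF_A=0$, and integrate by parts in the space variables to move the remaining derivatives off $F_A$ and onto $G_{u_0}$ and $f$. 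Using the two relations for $G_{u_0}$ recorded above, the leading terms collect into the manifestly nonnegative quantity
\begin{equation*}
2r^2\int_{T_r(u_0)}\Bigl|\,\pt_tA+\tfrac{1}{2(t_0-t)}\,\iota_{\xi}F_A\,\Bigr|^2 f^2 G_{u_0}\,dV_g\,dt\ \ge\ 0,
\end{equation*}
which is the source of the monotonicity.

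The remaining terms are of three kinds, and bounding each of them by the right‑hand side of the claim is the bulk of the argument. Terms carrying a factor $\nabla f$ are supported on $B_R\setminus B_{R/2}$, where, since $r\le R/2$ and $t_0-t\ge r^2$ on $T_r$, the weight obeys $G_{u_0}\le CR^{-2n}$; as $T_r\cap\mathrm{supp}\,f\subset P_R(u_0)$, these terms are bounded by $CR^{1-2n}\int_{P_R(u_0)}|F_A|^2\,dV_g\,dt$. Terms involving $\alpha$, for which $|\alpha|\le C|F_A|$ pointwise, are split by Cauchy--Schwarz: one part is reabsorbed into the square above after enlarging $C$, the other is bounded by $C\Phi(r)+Cr\,\mathrm{YM}(0)$. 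Terms produced by the deviation of $g$ and of $G_{u_0}$ from the flat model carry weights $|x|^2(t_0-t)^{-1}G_{u_0}$ or $|x|\,G_{u_0}$ and are likewise controlled by $C\Phi(r)+Cr\,\mathrm{YM}(0)$, using in the last step $\mathrm{YM}(t)\le\mathrm{YM}(0)$ from Lemma \ref{lmm1}, hence $\int_X|F_A|^2(\cdot,t)\le\mathrm{YM}(0)$. Altogether this gives
\begin{equation*}
\frac{d}{dr}\Phi(r)\ \ge\ -C\Phi(r)-Cr\,\mathrm{YM}(0)-CR^{1-2n}\int_{P_R(u_0)}|F_A|^2\,dV_g\,dt,
\end{equation*}
so that $\frac{d}{dr}\bigl(e^{Cr}\Phi(r)\bigr)\ge -Cre^{Cr}\mathrm{YM}(0)-Ce^{Cr}R^{1-2n}\int_{P_R(u_0)}|F_A|^2\,dV_g\,dt$; integrating over $[r_1,r_2]$ and using $r_2\le R/2$ to absorb the factor $(r_2-r_1)R^{1-2n}$ into $CR^{2-2n}$ yields the stated estimate.

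I expect the main obstacle to be the bookkeeping in the second and third paragraphs: organizing the integration by parts so that the leading terms really do close up into the nonnegative square, while at the same time isolating the torsion contribution $\alpha$ and the base‑curvature contribution in a form that is visibly of one of the admissible error types. This is precisely where the Gauduchon and Astheno‑K\"ahler hypotheses enter: they are what makes the energy identity of Lemma \ref{lmm1} hold (the Proposition above used $\pt\bp\og^{n-1}=\pt\bp\og^{n-2}=0$ to kill $\int_X\langle(\tau+\bar\tau)^*F_A,\pt_tA\rangle$), hence the uniform bound $\mathrm{YM}(t)\le\mathrm{YM}(0)$ that is needed to estimate the errors; one also invokes Demailly's identity (\ref{eq01}) to put the flow in the form (\ref{A.3}), in which $\pt_tA$ is $-D_A^*F_A$ plus the torsion term $\alpha$ with $|\alpha|\le C|F_A|$. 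A secondary technical point, routine but not to be skipped, is to check that replacing the genuine backward heat kernel of $(X,g)$ by the Euclidean Gaussian on $B_R$ contributes only errors of the admissible types; this follows from the $C^2$‑closeness of $g$ to the flat metric recorded above.
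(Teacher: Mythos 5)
Your overall strategy coincides with the paper's: derive an almost--monotonicity inequality $\frac{d\Phi}{dr}\ge -C\Phi(r)-Cr\,\mathrm{YM}(0)-C r R^{-2n}\int_{P_R(u_0)}|F_A|^2$, where the positive part is a completed square built from the Bianchi identity, the flow equation in the form (\ref{A.3}) (torsion term bounded pointwise by $C|F_A|$), and the radial identity for the Gaussian; the metric--deviation errors are absorbed via the normal--coordinate bounds and Chen--Struwe--type inequalities into $C\Phi+Cr\,\mathrm{YM}(0)$ (this is where Lemma \ref{lmm1}, hence the Gauduchon/Astheno--K\"ahler hypotheses, enters), the $\nabla f$ terms go into the $R^{2-2n}\int_{P_R}|F_A|^2$ term, and one integrates $e^{Cr}\Phi(r)$ over $[r_1,r_2]$. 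The genuine difference is how you produce the square: the paper performs the parabolic rescaling $x=x_0+r\widetilde{x}$, $t=t_0+r^2\widetilde{t}$ so that the whole $r$-dependence sits in $e(A)$, $f^2$, $\det g$ at rescaled arguments; differentiating then yields explicitly the radial term $x^i\pt_{x^i}e(A)$, which is converted by Bianchi into $D_A(x^iF_{A,ik}dx^k)$ and integrated by parts against $f^2G_{u_0}$. You instead keep the space variable fixed and plan to generate the radial structure from $\nabla G_{u_0}=-\frac{\xi}{2(t_0-t)}G_{u_0}$ and the backward heat equation, i.e.\ a Hamilton-style derivation; this is a known equivalent route (it trades the paper's explicit dilation term for a second integration by parts against $\pt_t G_{u_0}$ plus Bianchi) and, if carried out, gives the same square and the same error types.

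One step, as literally written, does not go through and needs repair. You claim the $r$-dependence sits only in the prefactor $r^2$ and the two time slices, and that the resulting slice integrals can be rewritten as $\int_{T_r}\pt_t(e(A)f^2G_{u_0})$. Writing $F(t)=\int_X e(A)f^2G_{u_0}\,dV_g$, the slices enter as $-2rF(t_0-r^2)+8rF(t_0-4r^2)$, whose mismatched weights are not of the form $c\bigl(F(t_0-r^2)-F(t_0-4r^2)\bigr)$, so they are not a single time integral of a $t$-derivative. The correct identity, obtained by substituting $t=t_0+r^2\widetilde{t}$ in the time variable only, is
\begin{equation*}
\frac{d\Phi}{dr}(r)=\frac{4}{r}\,\Phi(r)+2r\int_{T_r(u_0)}(t-t_0)\,\pt_t\bigl(e(A)\,f^2G_{u_0}\bigr)\,dV_g\,dt,
\end{equation*}
and the weight $(t-t_0)$ here is exactly what you need for the square $\bigl|(t_0-t)\pt_tA+\tfrac12\iota_\xi F_A\bigr|^2/(t_0-t)$ to close (it matches the paper's $I_{2.2}$, with $\pt_t$ also falling on $G_{u_0}$ since you have not rescaled space). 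With this correction, and with the $\pt_tG_{u_0}$ term handled through the backward heat equation plus Bianchi as you indicate, your argument goes through and yields the paper's differential inequality; the rest of your outline (absorption of the torsion term, Chen--Struwe bounds, cutoff estimates using $r_2\le R/2$, and the final integration) matches the paper's proof.
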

\begin{proof}
Choose normal geodesic coordinates $\{x_i\}^{2n}_{i=1}$ in the geodesic ball $B_R(x_0)$. Setting $x=r\widetilde{x },$ $t=t_0+r^2\widetilde{t}$, we have
\begin{equation*}
\begin{split}
\Phi(r)=&r^2\int_{T_r(u_0)} e(A)f^2 G_{u_0}\ dV_g\ dt\\
=&r^2\int_{t_0-4r^2}^{t_0-r^2}\int_{\mathbb{R}^{2n}} e(A)(x,t) f^2(x)G_{u_0}(x,t) \det{(g_{ij})}\ dx\ dt\\
=&r^4\int_{\tilde{T}_1} e(A)(r\widetilde{x},t_0+r^2\tilde{t}) f^2(r\tilde{x})G(\tilde{x},\tilde{t}) \det{(g_{ij})}(r\tilde{x})\ d\tilde{x} d\tilde{t}
\end{split}
\end{equation*}
where $\tilde{T}_1=[-4,-1]\times \mathbb{R}^{2n}.$ The $r$-direction derivative of $\Phi(r)$ is
\begin{equation*}
\begin{split}
d \Phi(r)/dr=&4r^3\int_{\tilde{T}_1} e(A)(r\widetilde{x},t_0+r^2\tilde{t}) f^2(r\tilde{x})G(\tilde{x},\tilde{t}) \det{g_{ij}}(r\tilde{x})\ d\tilde{x} d\tilde{t}\\
&+r^4\int_{\tilde{T}_1}\left(\frac{d}{dr} e(A)(r\widetilde{x},t_0+r^2\tilde{t})\right)f^2(r\tilde{x})G(\tilde{x},\tilde{t}) \det{g_{ij}}(r\tilde{x})\ d\tilde{x} d\tilde{t}\\
&+\int_{\tilde{T}_1}e(A)(r\widetilde{x},t_0+r^2\tilde{t})\left(\frac{d}{dr}f^2(r\tilde{x}) \det{(g_{ij})}(r\tilde{x})\right)G(\tilde{x},\tilde{t})d\tilde{x} d\tilde{t}\\
=:&I_1+I_2+I_3,
\end{split}
\end{equation*}
where $I_1=\frac{4\Phi(r)}{r}.$
At first, we calculate $I_2.$  By simple calculation, we have
\begin{equation}\label{eq8}
\begin{split}
  \frac{\pt}{\pt r}e(A)(r\tilde{x}^i,t_0+r^2\tilde{t})
&=\frac{\pt }{\pt x^i}e(A)(r\tilde{x}^i,t_0+r^2\tilde{t})\frac{\pt x^i}{\pt r}+\frac{\pt}{\pt t} e(A)(r\tilde{x}^i,t_0+r^2\tilde{t})\frac{\pt t}{\pt r}\\
&=\tilde{x}^i\frac{\pt}{\pt x^i} e(A)+2r\tilde{t}\ptt e(A)\\
&=\frac{x^i}{r}\frac{\pt}{\pt x^i} e(A)+\frac{2(t-t_0)}{r}\ptt e(A).
\end{split}
\end{equation}
Substituting (\ref{eq8}) into $I_2$, we have
\begin{equation*}
\begin{split}
I_2=&r\int_{T_r(u_0)}\left(x^i\frac{d}{dx^i} e(A)(x,t)\right) f^2(x)G_{u_0}(x,t) \ dV_g dt\\
&+r\int_{T_r(u_0)}\left(2(t-t_0)\frac{d}{dt} e(A)(x,t)\right) f^2(x)G_{u_0}(x,t) \ dV_g dt\\
=:&I_{2.1}+I_{2.2}
\end{split}
\end{equation*}
From the Bianchi identity
$$D_A F_A=0,$$
we have
\begin{equation}\label{bianchi}
\begin{split}
0=&  D_A F_A(\frac{\pt}{\pt x^i}, \frac{\pt}{\pt x^j},\frac{\pt}{\pt x^k})\\
=&\nabla_{A,\pt/\pt x^i}F_A(\frac{\pt}{\pt x^j},\frac{\pt}{\pt x^k})-\nabla_{A,\pt/\pt x^j}F_A(\frac{\pt}{\pt x^i},\frac{\pt}{\pt x^k})+\nabla_{A,\pt/\pt x^k}F_A(\frac{\pt}{\pt x^i},\frac{\pt}{\pt x^j}).
\end{split}
\end{equation}
For simplicity, we set $\nla_{A,i}:=\nla_{A,\frac{\pt}{\pt x^i}}$ and $\nla_i:=\nla_{\frac{\pt}{\pt x^i}}$. Therefore, by (\ref{bianchi}), we have
\begin{equation}\label{eq9}
\begin{split}
&x^i\frac{\pt}{\pt x^i}|F_A|^2\\
=&2x^i\mbox{Re}\lag\nabla_{A,i}F_A, F_A\rag\\
=&x^i\mbox{Re}\Big\lag \nabla_{A,i}F_A\left(\frac{\pt}{\pt x^j},\frac{\pt}{\pt x^k}\right)dx^j\wedge dx^k, F_A\Big\rag\\
=&x^i\mbox{Re}\Big\lag \left(\nabla_{A,j}F_A(\frac{\pt}{\pt x^i},\frac{\pt}{\pt x^k})-\nabla_{A,k}F_A(\frac{\pt}{\pt x^i},\frac{\pt}{\pt x^j})\right)dx^j\wedge dx^k, F_A\Big\rag\\
=&2x^i\mbox{Re}\Big\lag \nabla_{A,j}F_A\left(\frac{\pt}{\pt x^i},\frac{\pt}{\pt x^k}\right)dx^j\wedge dx^k,F_A\Big\rag\\
=&2\mbox{Re}\Big\lag (\nabla_{A,j}x^iF_A)\left(\frac{\pt}{\pt x^i},\frac{\pt}{\pt x^k}\right)dx^j\wedge dx^k,F_A\Big\rag
-2\mbox{Re}\Big\lag\delta^j_i F_A(\frac{\pt}{\pt x^i},\frac{\pt}{\pt x^k})dx^j\wedge dx^k, F_A\Big\rag\\
=&2\mbox{Re}\Big\lag dx^j\wedge (\nabla_{A,j}x^iF_A)(\frac{\pt}{\pt x^i},\frac{\pt}{\pt x^k})dx^k, F_A \Big\rag-4|F_A|^2
\end{split}
\end{equation}
and
\begin{equation}\label{eq10}
\begin{split}
&dx^j\wedge(\nabla_{A,j}(x^iF_A))(\frac{\pt}{\pt x^i},\frac{\pt}{\pt x^k}) dx^k\\
=&dx^j\wedge\left(\nabla_{A,j}(x^iF_{A,ik})-x^iF_A(\nabla_{j}\frac{\pt}{\pt x^i},\frac{\pt}{\pt x^k})-x^iF_A(\frac{\pt}{\pt x^i} ,\nabla_{j}\frac{\pt}{\pt x^k})\right) dx^k\\
=&\nabla_{A,j}(x^iF_{A,ik})dx^j\wedge dx^k-x^iF_A(\nabla_{j}\frac{\pt}{\pt x^i},\frac{\pt}{\pt x^k})dx^j\wedge dx^k\\
=&dx^j\wedge\nabla_{A,j}(x^iF_{A,ik}dx^k)-x^iF_{A,ik}dx^j\wedge \nabla_{j}dx^k-x^iF_A(\nabla_{j}\frac{\pt}{\pt x^i},\frac{\pt}{\pt x^k})dx^j\wedge dx^k\\
=&D_A(x^i F_{A,ik}dx^k)-x^iF_A(\nabla_{j}\frac{\pt}{\pt x^i},\frac{\pt}{\pt x^k})dx^j\wedge dx^k.
\end{split}
\end{equation}
The reason for the second equality in (\ref{eq10}) is that
\begin{equation*}
\begin{split}
&\sum_{j,k}F_A(\frac{\pt}{\pt x^i} ,\nabla_{j}\frac{\pt}{\pt x^k})dx^j\wedge dx^k\\
=&(\sum_{j<k}+\sum_{k< j})F_A(\frac{\pt}{\pt x^i} ,\nabla_{j}\frac{\pt}{\pt x^k})dx^j\wedge dx^k\\
=&\sum_{j< k}(F_A(\frac{\pt}{\pt x^i} ,\nabla_{j}\frac{\pt}{\pt x^k})dx^j\wedge dx^k+F_A(\frac{\pt}{\pt x^i} ,\nabla_{k}\frac{\pt}{\pt x^j})dx^k\wedge dx^j)\\
=&0.
\end{split}
\end{equation*}
And the reason for the forth equality in (\ref{eq10}) is that $dx^j\wedge \nabla_{j}dx^k=Ddx^k=0.$ Substituting (\ref{eq10}) into (\ref{eq9}), we have
\begin{equation*}
\begin{split}
x^i\frac{\pt}{\pt x^i}|F_A|^2=2\mbox{Re}\lag D_A(x^i F_{A,ik}dx^k)-x^iF_A(\nabla_{j}\frac{\pt}{\pt x^i},\frac{\pt}{\pt x^k})dx^j\wedge dx^k, F_A\rag-4|F_A|^2.
\end{split}
\end{equation*}
Noting that $\displaystyle{\frac{\pt G_{u_0}}{\pt x^{\alpha}}=\frac{x^{\alpha}G_{u_0}}{2(t-t_0)}}$, we have
\begin{equation}\label{eq11}
\begin{split}
&\int_{T_r(u_0)}\lag d(f^2 G_{u_0})\wedge( x^i F_{A,ik}dx^k), F_A\rag \ dV_g dt\\
=&\int_{T_r(u_0)}\lag x^i F_{A,ik}dx^k, \nabla (f^2 G_{u_0})\llcorner F_A\rag \ dV_g dt\\
=&\int_{T_r(u_0)}\lag x^i F_{A,ik}dx^k, 2 f g^{\alpha\beta}\frac{\pt f}{\pt x^{\alpha}}F_{A,\beta l}dx^l \rag G_{u_0} \ dV_g dt\\
&+\int_{T_r(u_0)}\lag x^i F_{A,ik}dx^k,\frac{g^{\alpha\beta}x^{\alpha}}{2(t-t_0)}F_{A,\beta l}dx^l \rag f^2G_{u_0} \ dV_g dt
\end{split}
\end{equation}
and
\begin{equation*}
\begin{split}
&\int_{T_r(u_0)}(t-t_0)\Big\lag F_A , d(f^2 G_{u_0})\wedge \frac{\pt A}{\pt t}\Big\rag\ dV_g dt\\
=&\int_{T_r(u_0)}(t-t_0)\Big\lag 2 g^{\alpha\beta}\frac{\pt f}{\pt x^{\alpha}}F_{A,\beta l}dx^l, \frac{\pt A}{\pt t} \Big\rag fG_{u_0}\ dV_g dt\\
&+\int_{T_r(u_0)}(t-t_0)\Big\lag \frac{g^{\alpha\beta}x^{\alpha}}{2(t-t_0)}F_{A,\beta l}dx^l, \frac{\pt A}{\pt t} \Big\rag f^2G_{u_0}\ dV_g dt.
\end{split}
\end{equation*}
From the above all, we obtain
\begin{equation*}
\begin{split}
I_{2.1}=&r2\mbox{Re}\int_{T_r(u_0)}\lag D_A(x^i F_{A,ik}dx^k), F_A\rag f^2 G_{u_0}\ dV_g dt\\
&-r2\mbox{Re}\int_{T_r(u_0)}\Big\lag x^iF_A(\nabla_{j}\frac{\pt}{\pt x^i},\frac{\pt}{\pt x^k})dx^j\wedge dx^k, F_A \Big\rag f^2 G_{u_0}\ dV_g dt\\
&-4r\int_{T_r(u_0)}|F_A|^2 f^2 G_{u_0}\ dV_g dt\\
=&-r2\mbox{Re}\int_{T_r(u_0)}\Big\lag ( x^i F_{A,ik}dx^k), \frac{\pt A}{\pt t}\Big\rag f^2 G_{u_0} \ dV_g dt\\
&-r2\mbox{Re}\int_{T_r(u_0)}\lag  x^i F_{A,ik}dx^k, [\Lambda_{\og}, d\og]^*F_A \rag f^2 G_{u_0}\ dV_g dt\\
&-r2\mbox{Re}\int_{T_r(u_0)}\lag x^i F_{A,ik}dx^k, 2 f g^{\alpha\beta}\frac{\pt f}{\pt x^{\alpha}}F_{A,\beta l}dx^l \rag fG_{u_0} \ dV_g dt\\
&-r2\mbox{Re}\int_{T_r(u_0)}\lag x^i F_{A,ik}dx^k,\frac{g^{\alpha\beta}x^{\alpha}}{2(t-t_0)}F_{A,\beta l}dx^l \rag f^2G_{u_0} \ dV_g dt\\
&-r2\mbox{Re}\int_{T_r(u_0)}\lag x^iF_A\big(\nabla_{j}\frac{\pt}{\pt x^i},\frac{\pt}{\pt x^k}\big)dx^j\wedge dx^k, F_A \rag f^2 G_{u_0}\ dV_g dt\\
&-4r\int_{T_r(u_0)}|F_A|^2 f^2 G_{u_0}\ dV_g dt,
\end{split}
\end{equation*}
and
\begin{equation*}
\begin{split}
I_{2.2}=&r2\int_{T_r(u_0)}(t-t_0)\frac{\pt}{\pt t} |F_A|^2f^2 G_{u_0}\ dV_g dt\\
=&r4\mbox{Re}\int_{T_r(u_0)}(t-t_0)\Big\lag F_A,D_A\frac{\pt A}{\pt t}\Big\rag f^2 G_{u_0}\ dV_g dt\\
=&r4\mbox{Re}\int_{T_r(u_0)}(t-t_0)\Big\lag D_A^*F_A,\frac{\pt A}{\pt t}\Big\rag f^2 G_{u_0}\ dV_g dt\\
&-4r\mbox{Re}\int_{T_r(u_0)}(t-t_0)\Big\lag F_A , d(f^2 G_{u_0})\wedge \frac{\pt A}{\pt t}\Big\rag\ dV_g dt\\
=&r4\int_{T_r(u_0)}(t-t_0)\Big|\frac{\pt A}{\pt t}\Big|^2f^2 G_{u_0}\ dV_g dt\\
&-r4\mbox{Re}\int_{T_r(u_0)}(t-t_0)\Big\lag [\Lambda_{\og}, d\og]^*F_A,\frac{\pt A}{\pt t} \Big\rag f^2 G_{u_0}\ dV_g dt\\
&-r4\mbox{Re}\int_{T_r(u_0)}(t-t_0)\Big\lag 2 g^{\alpha\beta}\frac{\pt f}{\pt x^{\alpha}}F_{A,\beta l}dx^l, \frac{\pt A}{\pt t} \Big\rag fG_{u_0}\ dV_g dt\\
&-r4\mbox{Re}\int_{T_r(u_0)}(t-t_0)\Big\lag \frac{g^{\alpha\beta}x^{\alpha}}{2(t-t_0)}F_{A,\beta l}dx^l, \frac{\pt A}{\pt t} \Big\rag f^2G_{u_0}\ dV_g dt.
\end{split}
\end{equation*}

For simplicity, we set $x\odot F_A=\frac{1}{2}x^iF_{A,ik}dx^k,$ $x\cdot F_A=\frac{1}{2}x^{\alpha}g^{\alpha\beta}F_{A,\beta l} dx^l$ and $ \nabla f \cdot F_A=2g^{\alpha\beta}f^{-1}f_{\alpha}F_{A,\beta l}dx^l,$ where $f_{\alpha}=\frac{\pt f}{\pt x^{\alpha}}.$ Substituting $I_{2.1}$ and $I_{2.2}$ into $I_2$, we have

\begin{equation*}
\begin{split}
I_2=&4r\int_{T_r(u_0)}\frac{1}{|t-t_0|}\Big||t-t_0|\frac{\pt A}{\pt t}-x\odot F_A\Big|^2 f^2 G_{u_0}\ dV_g dt\\
&+4r \int_{T_r(u_0)}\frac{1}{|t-t_0|}\Big\lag x\cdot F_A-x\odot F_A, x\odot F_A-|t-t_0|\frac{\pt A}{\pt t}\Big\rag f^2 G_{u_0}\ dV_g dt\\
&+4r\int_{T_r(u_0)}\Big\lag \nabla f\cdot F_A,  |t-t_0|\frac{\pt A}{\pt t}-x\odot F_A\Big\rag f^2 G_{u_0}\ dV_g dt\\
&+4r \int_{T_r(u_0)}\left\lag [\Lambda_{\og}, d\og]^*F_A,|t-t_0|\frac{\pt A}{\pt t}-x\cdot F_A\right \rag f^2 G_{u_0}\ dV_g dt\\
&-r2\int_{T_r(u_0)}\left\lag x^iF_A(\nabla_{j}\frac{\pt}{\pt x^i},\frac{\pt}{\pt x^k})dx^j\wedge dx^k, F_A\right \rag f^2 G_{u_0}\ dV_g dt\\
&-4r\int_{T_r(u_0)}|F_A|^2 f^2 G_{u_0}\ dV_g dt.
\end{split}
\end{equation*}
By Cauchy inequality, it holds that

\begin{equation}\label{eq12}
\begin{split}
I_2\geq & r\int_{T_r(u_0)}\frac{1}{|t-t_0|}\Big||t-t_0|\frac{\pt A}{\pt t}-x\odot F_A\Big|^2 f^2 G_{u_0}\ dV_g dt\\
&-4r\int_{T_r(u_0)}\frac{1}{|t-t_0|}\big|x\cdot F_A-x\odot F_A\big|^2f^2G_{u_0}\ dV_gdt\\
&-4r\int_{T_r(u_0)}|t-t_0||\nabla f\cdot F_A|^2f^2G_{u_0}\ dV_g dt\\
&-4r\int_{T_r(u_0)}|t-t_0||[\Lambda_{\og}, d\og]^*F_A|^2f^2G_{u_0}\ dV_g dt\\
&-r2\int_{T_r(u_0)}\left\lag x^iF_A(\nabla_{j}\frac{\pt}{\pt x^i},\frac{\pt}{\pt x^k})dx^j\wedge dx^k, F_A \right\rag f^2 G_{u_0}\ dV_g dt\\
&-4r\int_{T_r(u_0)}|F_A|^2 f^2 G_{u_0}\ dV_g dt.
\end{split}
\end{equation}

By simple calculation, we have
\begin{equation}\label{eq13}
\begin{split}
I_3=&r\int_{T_r(u_0)}|F_A|^2 x^i\frac{\pt \left(f^2\sqrt{\det (g_{ij})}\right)}{\pt x^i}G_{u_0}\ dxdt\\
=&r\int_{T_r(u_0)}|F_A|^2 x^i 2ff_i G_{u_0}\ dV_gdt\\
&+r\int_{T_r(u_0)}|F_A|^2 x^i\frac{\pt \sqrt{\det (g_{ij})}}{\pt x^i}f^2G_{u_0}\ dxdt\\
=&r\int_{T_r(u_0)}|F_A|^2 x^i 2ff_i G_{u_0}\ dV_gdt\\
&+\frac{r}{2}\int_{T_r(u_0)}|F_A|^2 x^i \mbox{tr}\left(\frac{\pt g}{\pt x^i}g^{-1} \right)f^2 G_{u_0}\ dV_gdt.
\end{split}
\end{equation}
Since $|g_{ij}-\delta_{ij}|\leq C|x|^2$, $\Big|\displaystyle{\frac{\pt g_{ij}}{\pt x^k}}\Big|\leq C|x|$ and $|\Gamma^i_{jk}|\leq C|x|$, there exists a constant $C_1$ such that
\begin{equation}\label{eq14}
\begin{split}
&|x\cdot F_A-x\odot F_A|^2=\Big|\frac{1}{2}x^i(\delta^i_j-g^{ij})F_{A,jl}dx^l\Big|^2\leq C_1|x|^6|F_A|^2,\\
&\Big\lag x^iF_A(\nabla_{j}\frac{\pt}{\pt x^i},\frac{\pt}{\pt x^k})dx^j\wedge dx^k, F_A \Big\rag\leq C_1|x|^2|F_A|^2,\\
&\tr\left(\frac{\pt g}{\pt x^i }g^{-1}\right)\leq C_1|x|.
\end{split}
\end{equation}
In addition, it is easy to check that there exists a constant $C_2>0$ such that
\begin{equation}\label{eq15}
  |[\lw, d\og]^*F_A|^2\leq C_2|F_A|^2.
\end{equation}
Substituting  (\ref{eq14}) and (\ref{eq15}) into (\ref{eq13}) and (\ref{eq12}), we have there exists a constant $C_3>0$
\begin{equation*}
  \begin{split}
 \frac{d \Phi(r)}{d r}&\geq  r\int_{T_r(u_0)}\frac{1}{|t-t_0|}\Big||t-t_0|\frac{\pt A}{\pt t}-x\odot F_A\Big|^2 f^2 G_{u_0}\ dV_g dt\\
&-C_3r\int_{T_r(u_0)}\frac{|x|^6}{|t-t_0|}|F_A|^2 f^2G_{u_0}\ dV_gdt\\
&-4r\int_{T_r(u_0)}|t-t_0||\nabla f\cdot F_A|^2f^2G_{u_0}\ dV_g dt\\
&-C_3r\int_{T_r(u_0)}|t-t_0||F_A|^2f^2G_{u_0}\ dV_gdt\\
&-C_3r\int_{T_r(u_0)}|x|^2|F_A|^2f^2G_{u_0}\ dV_gdt\\
&-2r\int_{T_r(u_0)}|x||\nla f||f||F_A|^2G_{u_0}\ dV_gdt
\end{split}
\end{equation*}
From \cite[p.~99]{chstr}, we have there exists a constant $C_4>0$ such that
\begin{equation*}
\begin{split}
&r^{-1}|t-t_0||x|^6 G_{u_0}\leq C_4(1+G_{u_0}), \\
&r^{-1}|x|^2 G_{u_0}\leq C_4(1+G_{u_0})
\end{split}
\end{equation*}
holds on $T_r(u_0)$. Therefore, we get
\begin{equation*}
\begin{split}
&-C_3r\int_{T_r(u_0)}\left(\frac{|x|^6}{|t-t_0|}+|t-t_0|+|x|^2\right)|F_A|^2 f^2G_{u_0}\ dV_gdt\\
&\geq-C_5\Phi(r)-C_5r\mbox{YM}(0),
\end{split}
\end{equation*}
where $C_5$ is a positive constant depending on $C_3$ and $C_4.$
At last, we estimate the remaining two terms
$$-4r\int_{T_r(u_0)}|t-t_0||\nabla f\cdot F_A|^2f^2G_{u_0}\ dV_g dt $$ and $$-2r\int_{T_r(u_0)}|x||\nla f||f||F_A|^2G_{u_0}\ dV_gdt.$$
Since $\mbox{supp}(\nabla f)\subseteq  B_R(x_0)\setminus B_{R/2}(x_0)$, $|\nabla f|\leq 2/R$ and $|t-t_0|\leq 4r^2$, we have
\begin{equation*}
|t-t_0||\nabla f\cdot F_A|^2f^2G_{u_0}\leq \frac{16}{(4\pi)^nR^2}r^{2-2n}\exp{\left(-\frac{R^2}{64r^2}\right)}|F_A|^2
\end{equation*}
and
\begin{equation*}
|x||\nabla f||f|G_{u_0}\leq R\cdot \frac{2}{R}(4\pi)^{-n}r^{-2n}\exp\left(-\frac{R^2}{64r^2}\right).
\end{equation*}
Setting $h(r)=r^{2-2n}\exp{(-\frac{R^2}{64r^2})}$, we have
\begin{equation*}
h'(r)=\left( R^2/32-(2n-2)r^2\right)r^{-(2n+1)}\exp{(-R^2/64r^2)}.
\end{equation*}
So $h(r)\leq \left(\frac{64(n-1)}{e}\right)^{n-1}R^{2-2n}$. This implies
\begin{equation*}
\begin{split}
&-4r\int_{T_r(u_0)}|t-t_0||\nabla f\cdot F_A|^2f^2G_{u_0}\ dV_g dt\\
\geq&-\frac{C(n)r}{R^{2n}} \int_{t_0-4r^2}^{t_0-r^2}\int_{B_R(x_0)}|F_A|^2\ dV_gdt\\
\geq &-\frac{2C(n)r}{R^{2n}}\int_{t_0-R^2}^{t_0+R^2}\int_{B_R(x_0)}|F_A|^2\ dV_gdt.
\end{split}
\end{equation*}
Using the same method, we have
\begin{equation*}
\begin{split}
-2r\int_{T_r(u_0)}|x||\nla f||f||F_A|^2G_{u_0}\ dV_gdt
\geq -\frac{C(n)r}{R^{2n}}\int_{t_0-R^2}^{t_0+R^2}\int_{B_R(x_0)}|F_A|^2\ dV_gdt.
\end{split}
\end{equation*}

From the above all, we obtain that there exists a constant $C_6>0$ such that
\begin{equation*}
\begin{split}
\frac{d\Phi(r)}{dr}&=I_1+I_2+I_3\\
&\geq-C_6\Phi(r)-C_6r\mbox{\rm YM}(0)-\frac{C_6r}{R^{2n}}\int_{P_R(x_0,t_0)}|F_A|^2\ dV_gdt.
\end{split}
\end{equation*}
Setting $\mbox{YM(0)}+R^{-2n}\int_{P_{R}(x_0,t_0)}|F_A|^2\ dVgdt=B$, we have
\begin{equation*}
  \frac{d}{d r}\big(e^{C_6r}\Phi(r)\big)=e^{C_6r}\left(\Phi'(r)+C_6\Phi(r)\right)\geq -C_6re^{C_6r}B.
\end{equation*}
Integrating the two sides along $[r_1,r_2]$, we have
\begin{equation*}
  \Phi(r_1)\leq \exp{(C_7(r_2-r_1))}\Phi(r_2)+C_7(r_2^2-r_1^2)\mbox{YM(0)}+C_7R^{2-2n}\int_{P_R(x_0,t_0)}|F_A|^2\ dVgdt,
\end{equation*}
where $C_7$ is a positive constant depending on the geometry of $X$.

\end{proof}

\subsection{Small energy regularity}

\begin{thm}\label{smallenergy}
  Suppose that $A(t)$ is a smooth solution of the heat flow (\ref{A.0}), then there exist positive constants $\vps_0$ and $\delta_0$ such that
 if for some $0<R<\min\{i_X/2, \sqrt{t_0}/2\}$, the inequality
$$ R^{2-2n}\int_{P_R(u_0)}e(A)<\vps_0$$
holds, then  for any $\delta\in (0,\min\{\delta_0,1/4\})$, we have
\begin{eqnarray*}
  \sup\limits_{P_{\delta R}(x_0,t_0)}|F_A|^2<16(\delta R)^{-4}.
\end{eqnarray*}
\end{thm}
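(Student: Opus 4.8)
The plan is to argue by the standard $\epsilon$-regularity method of Schoen/Struwe/Hong--Tian, using the monotonicity formula just established as the main input. Set $e(A)=|F_A|^2$. I would first reduce the statement to the parabolic analogue of Schoen's lemma: for a fixed parabolic cylinder $P_{R}(u_0)$, define
\[
  f(r)=(R-r)^{4}\sup_{P_{r}(u_0)}e(A),\qquad 0\le r\le R,
\]
after rescaling so that $R$ is replaced by a unit-size cylinder (using $x\mapsto x/R$, $t\mapsto t/R^2$, which leaves the scale-invariant hypothesis $R^{2-2n}\int_{P_R}e(A)<\varepsilon_0$ and the conclusion $\sup_{P_{\delta R}}e(A)<16(\delta R)^{-4}$ invariant). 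The function $f$ is continuous and vanishes at $r=R$, so it attains its maximum at some $r_0<R$; pick $u_1=(x_1,t_1)\in \overline{P_{r_0}}$ where $\sup_{P_{r_0}}e(A)=e(A)(u_1)=:e_0$. The goal is to show $e_0(R-r_0)^4$ is bounded by a universal constant, which then yields the conclusion at scale $\delta R$ by choosing $\delta_0$ small.

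Next I would run the blow-up/contradiction dichotomy. Writing $\rho_0=\tfrac12(R-r_0)$, one has $\sup_{P_{\rho_0}(u_1)}e(A)\le 16 e_0$ by the choice of $r_0$ as the maximiser of $f$. Consider two cases. If $e_0\rho_0^2\le 1$ we are already done. Otherwise, set $\lambda=e_0^{-1/2}<\rho_0$ and rescale the connection on $P_{\lambda^{-1}\rho_0}$: $\tilde A(x,t)=A(x_1+\lambda x,\,t_1+\lambda^2 t)$, whose curvature $\tilde F$ satisfies $|\tilde F|(0)=1$ and $|\tilde F|\le 16$ on $P_{\rho_0/\lambda}$ with $\rho_0/\lambda\to\infty$. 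Apply the Bochner-type inequality of Lemma \ref{bochner},
\[
  (\triangle-\partial_t)|\tilde F_{\tilde A}|^2\ge -C(1+|\tilde F_{\tilde A}|)|\tilde F_{\tilde A}|^2-C|\tilde F_{\tilde A}||\nabla_{\tilde A}\tilde F_{\tilde A}|,
\]
together with the $L^\infty$ bound just obtained and a Moser-type parabolic mean-value inequality, to get
\[
  1=|\tilde F_{\tilde A}|^2(0)\le C\int_{P_1(0)}|\tilde F_{\tilde A}|^2\,dV_g\,dt = C\,\lambda^{2-2n}\int_{P_\lambda(u_1)}|F_A|^2\,dV_g\,dt .
\]
Now I bound the right-hand side by the hypothesis via the monotonicity formula from the preceding theorem: applying it with centre $u_1$ and radii $r_1=\lambda$, $r_2$ comparable to $R$ gives $\lambda^{2-2n}\int_{P_\lambda(u_1)}|F_A|^2 \le C\,\Phi_{u_1}(\text{scale }\sim R) + C\,R^{2}\mathrm{YM}(0)+ C R^{2-2n}\int_{P_R}|F_A|^2$, and after the initial rescaling all terms on the right are controlled by a universal multiple of $R^{2-2n}\int_{P_R(u_0)}e(A)<\varepsilon_0$ (the $\mathrm{YM}(0)$ term is harmless because on the unit-size rescaled picture it is absorbed; more precisely one keeps $R$ small). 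Choosing $\varepsilon_0$ so that $C\varepsilon_0<1$ produces the contradiction $1<1$, hence $e_0\rho_0^2\le 1$, i.e. $(R-r_0)^2\sup_{P_{r_0}}e(A)\le 4$. Feeding $r=R/2$ (so $r\le r_0$ is not guaranteed, but $f(R/2)\le f(r_0)\le$ const gives $\sup_{P_{R/2}}e(A)\le C R^{-4}$) and then iterating once more to the scale $\delta R$ with $\delta<\delta_0$ gives $\sup_{P_{\delta R}}e(A)<16(\delta R)^{-4}$ after adjusting $\delta_0$.

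The main obstacle is the parabolic mean-value (Moser) inequality in the non-Kähler setting: the evolution equation for $|F_A|^2$ is not the clean heat inequality one has for the Yang--Mills flow over a Kähler base, because of the extra first-order term $-D_A\alpha$ with $\alpha=[\Lambda_\omega,d\omega]^*F_A$ and the torsion terms in Lemma \ref{bochner}. I would handle this by noting that on the rescaled cylinder these terms carry positive powers of $\lambda$ (the torsion of $g$ and $\mathrm{Ric},\mathrm{Rm}$ are fixed while the metric is blown up), so after rescaling the differential inequality becomes $(\triangle-\partial_t)|\tilde F|^2\ge -C|\tilde F|^2 - C|\tilde F||\nabla_{\tilde A}\tilde F|$ with $C$ small, and the standard parabolic Moser iteration (as in Hong--Tian \cite{hong2004asymptotical} or \cite[Ch.~6]{DonKro}) applies uniformly. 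A secondary technical point is ensuring the cut-off functions and the volume-form distortion $\det(g_{ij})$ contribute only lower-order errors, which is exactly the content of the estimates $|g_{ij}-\delta_{ij}|\le C|x|^2$, $|\partial g_{ij}|\le C|x|$ used already in the monotonicity proof; I would reuse those verbatim.
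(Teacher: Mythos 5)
Your proposal is, in outline, the same argument the paper gives: Schoen-type point picking with the function $(\,\cdot\,)^4\sup|F_A|^2$, a rescaling to unit curvature scale at the maximum point, the Bochner inequality of Lemma \ref{bochner} plus a parabolic mean value inequality to get $1\le C\cdot(\text{scale-invariant local energy})$, and then the monotonicity formula to bound that local energy by $\varepsilon_0$ plus lower-order terms, giving the contradiction. (The paper rescales the metric, $\tilde g=\rho_0^{-2}g$, rather than the coordinates, and sets up $f(r)=(2\delta R-r)^4\sup_{P_r}|F_A|^2$ directly at scale $2\delta R$ instead of passing from scale $R$ down to $\delta R$; these are cosmetic differences.)

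However, as written your normalisation would fail, because your scaling exponents are off for curvature: under $x\mapsto x_1+\lambda x$, $t\mapsto t_1+\lambda^2 t$ the curvature scales by $\lambda^2$, so to achieve $|\tilde F_{\tilde A}|(0)=1$ you need $\lambda=e_0^{-1/4}$, not $e_0^{-1/2}$, and the correct dichotomy is $e_0\rho_0^4\le\mathrm{const}$ (i.e. $f(r_0)$ bounded), not $e_0\rho_0^2\le 1$; with your exponents the ``done'' case only yields $\sup_{P_{R/2}}e(A)\le CR^{-2}$, from which the stated conclusion $16(\delta R)^{-4}$ does not follow. The paper's choice $\rho_0=(2\delta R-r_0)f(r_0)^{-1/4}$, giving $e_{\rho_0}(x_1,0)=\rho_0^4|F_A|^2(x_1,t_1)=1$, is exactly the corrected normalisation. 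Two further points you gloss over: (i) the monotonicity quantity $\Phi$ lives on the backward slab $T_r(u_0)=\{t_0-4r^2\le t\le t_0-r^2\}$, so to dominate the full cylinder $P_{\rho_0}(x_1,t_1)$ you must recentre the heat kernel forward in time at $(x_1,t_1+2\rho_0^2)$, as in the chain of inequalities (\ref{eq21}); (ii) the $\mathrm{YM}(0)$ term is not absorbed by ``keeping $R$ small'' ($R$ is given by the hypothesis), but by taking $r_2=\min\{1/4,\delta_0\}R$ with $R\le i_X/2$ and choosing the constants in the right order: first $\delta_0$ small (depending on $\mathrm{YM}(0)$ and the geometry), then $\varepsilon_0$ small depending on $\delta_0$, since the term $\delta_0^{2-2n}\varepsilon_0$ blows up in $\delta_0$. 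With these corrections your argument coincides with the paper's proof.
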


\begin{proof}
 For any $\delta\in (0,1/4]$, we define the function
 $$f(r)=(2\delta R-r)^4\sup\limits_{P_r(x_0,t_0)}|F_A|^2.$$
Since $f(r)$ is continuous and $f(2\delta R)=0$, we have $f(r)$ attains its maximum at a certain point $r_0\in [0,2\delta R)$. We claim that $f(r_0)<16,$ this means that for any $r\in [0,2\delta R)$, we have
$$(2\delta R-r)^4\sup\limits_{P_r(x_0,t_0)}|F_A|^2<16.$$
In particularly, when $r\in [0,\delta R]$, it holds that $\sup\limits_{P_r(x_0,t_0)}|F_A|^2<16/(2\delta R-r)^4<16(\delta R)^{-4}$.

Assuming that the claim is not true. This means $f(r_0)\geq 16.$ Set
$$\rho_0=(2\delta R-r_0)f(r_0)^{-1/4}<\frac{1}{2}(2\delta R-r_0)=\delta R-r_0/2.$$
Rescal the Riemannian metric $\tilde{g}=\rho_0^{-2}g$ and $t=t_1+\rho^2\tilde{t}$, where $(x_1,t_1)$ satisfies $$|F_A|^2(x_1,t_1)=\sup\limits_{P_{r_0}(x_0,t_0)}|F_A|^2.$$
Setting
\begin{eqnarray*}
  \begin{split}
    e_{\rho_0}(x,\tilde{t})=|F_A|^2_{\tilde{g}}=\rho_0^4|F_A|^2_{g},
  \end{split}
 \end{eqnarray*}
we have
\begin{equation*}
  e_{\rho_0}(x_1,0)=\rho_0^4|F_A|^2(x_1,t_1)=\rho_0^4\times \frac{f(r_0)}{(2\delta R-r_0)^4}=1
\end{equation*}
and
\begin{equation}\label{eq16}
\begin{split}
  \sup\limits_{(x,\tilde{t})\in \tilde{P}_1(x_1,0)}e_{\rho_0}(A)(x,\tilde{t})=&\rho_0^4\sup\limits_{P_{\rho_0}(x_1,t_1)}e(A)\leq \rho_0^4\sup\limits_{P_{\frac{2\delta R+r_0}{2}}(x_1,t_1)}e(A)\\
  \leq &\rho_0^4\left(\frac{2\delta R+r_0}{2} \right)^{-4}f(r_0)=16,
\end{split}
\end{equation}
where $\tilde{P}_1(x_1,0)=B_{\rho_0}(x_0)\times [-1,1]$.
From the Bochner type inequality and (\ref{eq16}), on $\tilde{P}_{1}(x_1,0)$, it holds that
\begin{eqnarray}\label{eq20}
  \begin{split}
    (\frac{\partial }{\partial \tilde{t}}-\tg_{\tilde{g}})e_{\rho_0}(x,\tilde{t})
    &=\rho_0^6(\ptt-\tg_g)e(A)(x,t)\\
    &\leq C\rho_0^6(1+|Ric|_g+|R_m|_g+|F_A|_g)e(A)\\
    &\leq C' e_{\rho_0}.
    \end{split}
\end{eqnarray}
By the parabolic mean value inequality, we have
\begin{eqnarray*}
  \begin{split}
    1=e_{\rho_0}(x_1,0)\leq C''\int_{\tilde{P}(x_1,0)}|F_A|^2_{\tilde{g}}\ dV_{\tilde{g}}d\tilde{t},
  \end{split}
\end{eqnarray*}
where $C>0$ is a constant depending on the geometry of $X$ and the initial connection $A_0.$

Choose normal geodesic coordinates centred at $x_1$ and construct cut-off function $\vph\in C^{\infty}_0(B_{R/2}(x_1))$ such that $0\leq \vph\leq 1$, $\vph\equiv 1$ on $B_{R/4}(x_1)$ and $|\nla\vph|\leq 8/R$. Taking $r_1=\rho$ and $r_2=\min\{1/4, \delta_0\}R$ and applying the monotonicity formula, we have
\begin{equation}\label{eq21}
  \begin{split}
   \int_{\tilde{P_1}(x_1,0)}|F_A|^2_{\tilde{g}}\ dV_{\tilde{g}}d\tilde{t}&=\rho_0^{2-2n}\int_{P_{\rho_0}(x_1,t_1)}|F_A|_g^2\ dV_gdt\\
   \leq &C\rho_0^2\int_{P_{\rho_0}(x_1,t_1)}|F_A|_g^2G_{(x_1,t_1+2\rho_0^2)}\vph^2\ dV_gdt\\
   \leq &C\rho_0^2\int_{T_{\rho_0}(x_1,t_1+2\rho_0^2)}|F_A|_g^2G_{(x_1,t_1+2\rho_0^2)}\vph^2\ dV_gdt\\
   \leq & Ce^{C(r_2-\rho_0)}r_2^2\int_{T_{r_2}(x_1,t_1+2\rho_0^2)}|F_A|_g^2G_{(x_1,t_1+2\rho_0^2)}\vph^2\ dV_gdt\\
   &+C(r_2^2-\rho_0^2)YM(0)+C(R/2)^{2-2n}\int_{P_{R/2}(x_1,t_1)}|F_A|^2\ dV_gdt\\
   \leq &C\delta_0^{2-2n}R^{2-2n}\int_{P_R(x_0,t_0)}|F_A|^2\ dV_gdt+C\delta_0^2R^2YM(0)\\
   &+C\delta_0^{2-2n}R^{2-2n}\int_{P_R(x_0,t_0)}|F_A|^2\ dV_gdt\\
   \leq &\tilde{C}(\delta_0^{2-2n}\vps_0+\delta_0^2R^2YM(0)),
  \end{split}
\end{equation}
where the constants depend on the geometry of $X$ and the initial data $A_0.$ From (\ref{eq20}) and (\ref{eq21}), we have $ 1\leq C''\tilde{C}(\delta_0^{2-2n}\vps_0+\delta_0^2R^2YM(0)).$ Thus, choosing  $\delta_0$ and $\vps_0$ properly,  we can obtain a contradiction .
\end{proof}
\section{Proof of Theorem \ref{main}}
Using the same argument as that in the proof of Theorem 2 in Bando and Siu's paper (\cite{bando}), we have
\begin{thm}\label{extension}
  Let $X$ be an $n$-dimensional complex manifold, $g$ a Hermitian metric on $X$ with associated $(1,1)$-form $\og$. Let $(E, h)$ be a holomorphic vector bundle with a Hermitian metric $h$ over $X\setminus S$, where $S$ is a closed subset with locally finite Hausdorff measure of real co-dimension $4$.If the curvature $F_h$ is locally integrable, then
\begin{enumerate}[(1)]
  \item $E$ can be extended to the whole $X$ as a reflexive sheaf $\mee$, and for any local section $s\in \Gamma(U,\mee)$, $\log^+ h(s,s)$ belongs to $H_{loc}^1;$
\item If $\lw F$ is locally bounded, then $h$ is locally bounded and $h\in W^{2,p}_{loc}$ for any finite $p$ where $\mee$ is locally free;
\item If $(E,h)$ is Hermitian-Einstein, then $h$ smoothly extends as a Hermitian-Einstein metric over the place where $\mee$ is locally free.
\end{enumerate}
\end{thm}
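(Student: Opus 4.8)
The plan is to follow Bando--Siu's original argument \cite{bando} adapted to the Hermitian (non-K\"ahler) setting, where the only structural inputs we actually need are that $g$ is Gauduchon (so degrees and the Chern--Weil theory used implicitly make sense) and that the curvature satisfies the stated integrability; the K\"ahler hypothesis in \cite{bando} is never used in an essential way for these three statements. First, for part (1), I would work locally: fix a coordinate polydisc $U$ meeting $S$ in a set of locally finite Hausdorff $(2n-4)$-measure. The key analytic estimate is a mean-value / sub-mean-value inequality for $\log^+ h(s,s)$ for a holomorphic section $s$ of $E$ over $U\setminus S$: using $\bp_E s = 0$ one computes that $\log h(s,s)$ is plurisubharmonic on $U\setminus S$ away from the zero set of $s$, and more precisely $\sqrt{-1}\,\pt\bp \log h(s,s) \ge -(\text{curvature term})$, so $\log^+ h(s,s) + C|z|^2$ is subharmonic (with respect to the background metric) on $U\setminus S$. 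Since $S$ has Hausdorff codimension $\ge 4$ and thus vanishing $2$-capacity, a removable-singularity theorem for (sub)harmonic functions bounded above in $L^1_{loc}$ shows $\log^+ h(s,s)\in H^1_{loc}$ across $S$; the $L^1_{loc}$ control comes from the local integrability of $F_h$ through a Poincar\'e-type inequality. Having this for enough sections, one invokes the algebraic part of Bando--Siu: the sheaf $\mee$ of holomorphic sections $s$ of $E$ over $U\setminus S$ with $\log^+ h(s,s)\in H^1_{loc}$ is coherent, and being a subsheaf of a reflexive (indeed locally free, away from $S$) sheaf which agrees with $E$ off a codimension $\ge 2$ set, its double dual $\mee^{**}$ is the desired reflexive extension.

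For part (2), under the extra hypothesis that $\lw F_h$ is locally bounded, I would show $h$ and $h^{-1}$ are locally bounded on the locally free locus by a Moser iteration / maximum-principle argument applied to the functions $h(s,s)$ and $h^*(s^*,s^*)$ for local holomorphic frames: boundedness of $\lw F_h$ gives a uniform bound on the ``Laplacian'' $\lw \sqrt{-1}\,\pt\bp \log h(s,s)$ from one side, and combined with the $H^1$-bound from part (1) and De Giorgi--Nash--Moser estimates (here the Gauduchon condition guarantees the relevant second-order operator $\lw\sqrt{-1}\,\pt\bp$ is, up to lower-order terms and after multiplying by $\og^{n-1}/\og^n$, a uniformly elliptic operator in divergence-type form whose weak solutions obey Harnack), one upgrades $L^1$ to $L^\infty$. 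Once $h$ is bounded above and below, the equation $\sqt\lw F_h=(\text{bounded})$ read in a holomorphic frame becomes a second-order elliptic equation for $h$ with bounded right-hand side and bounded coefficients, so elliptic $L^p$ theory gives $h\in W^{2,p}_{loc}$ for all finite $p$.

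For part (3), assuming in addition $\sqt\lw F_h = \lambda\,\mathrm{Id}$, I would bootstrap: from (2) we have $h\in W^{2,p}_{loc}\subset C^{1,\alpha}_{loc}$ on the locally free locus minus $S$, and the Hermitian-Einstein equation is, in a fixed holomorphic gauge, an elliptic system of the schematic form $\lw(\bp(h^{-1}\pt h)) = \lambda\,\mathrm{Id}\cdot(\text{volume factor})$; with $h$ already $C^{1,\alpha}$ the coefficients become H\"older continuous, Schauder estimates give $h\in C^{2,\alpha}$, and then a standard elliptic bootstrap yields $h\in C^\infty$, so $h$ extends smoothly as a Hermitian--Einstein metric across $S$ on the locally free part of $\mee$. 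I expect the main obstacle to be part (2): verifying carefully that the Gauduchon condition $\pt\bp\og^{n-1}=0$ is exactly what is needed to put $\lw\sqt\pt\bp$ into a self-adjoint divergence form (so that Moser iteration and the weak Harnack inequality apply to $\log h(s,s)$ across the singular set), and handling the interplay between the a priori-only-$H^1$ regularity of $h$ and the need to make sense of the equation weakly before the regularity is known. The removable-singularity step in part (1) is technically the heart but is by now standard (potential theory in codimension $\ge 4$); the Astheno-K\"ahler condition plays no role here and is only used earlier in the paper for the flow estimates.
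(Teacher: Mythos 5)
Your route is the same as the paper's: the paper offers no independent argument for Theorem \ref{extension} at all, it simply states that the theorem follows ``using the same argument as in the proof of Theorem 2 in Bando--Siu,'' and your plan is precisely to rerun Bando--Siu in the Hermitian setting, so at the level of strategy the two agree. Two caveats are worth recording. First, what you call ``the algebraic part of Bando--Siu'' --- the coherence of the extension, i.e.\ showing that $j_*\mathcal{O}(E)$ (for $j:X\setminus S\hookrightarrow X$) is coherent and reflexive --- is in fact the analytic heart of their Theorem 2: it requires producing enough local holomorphic sections across $S$, via slicing by transverse polydiscs, removable-singularity arguments for holomorphic functions controlled by the subharmonicity estimate on $\log^{+}h(s,s)$, and $L^{2}$-$\bp$ methods; the subharmonicity and capacity-zero observations you give are the input to, not a substitute for, that construction. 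Since the paper itself defers to \cite{bando} here, this is a gloss rather than an error, but it is the one step of your outline that is not self-contained. Second, your repeated claim that the Gauduchon condition $\pt\bp\og^{n-1}=0$ is ``exactly what is needed'' for the Moser iteration in part (2) is off: all three statements of the theorem are local, and Bando--Siu's arguments go through for an arbitrary Hermitian metric because $\lw\sqt\pt\bp$ is an elliptic second-order operator with smooth coefficients, differing from the Riemannian Laplacian only by torsion-type first-order terms, which is all that the mean-value/Moser and $L^{p}$--Schauder bootstrap arguments require. The Gauduchon and Astheno-K\"ahler hypotheses enter this paper only through the global flow estimates (degree theory, the energy identity of Lemma \ref{lmm1}), not through Theorem \ref{extension}; as you correctly note for Astheno-K\"ahler, the same is true of Gauduchon.
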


{\bf Proof of Theorem \ref{main}}

 $(1)$ { $Step 1$ Construct the closed set $\Sigma$ of Hausdorff codimension at least $4$}

From Lemma \ref{lmm1}, we have for arbitrary sequence $t_{k}\raw \infty$ and $a>0$, it holds that
\begin{eqnarray*}
    \int_{t_k-a}^{t_k+a}\int_X\Big|\ptA \Big|^2\ dV_gdt\raw 0,\ \ \ t_k\raw \infty.
  \end{eqnarray*}
Then for arbitrary $\epsilon >0$, there exists $K\in \mathbb{Z}^+$, such that when $k\geq K,$ it holds that
\begin{equation*}
  \int_{t_k-a}^{t_k+a}\int_X\Big|\ptA\Big|^2\ dV_gdt<\epsilon.
\end{equation*}

Construct the set
\begin{eqnarray*}
  \Sigma=\bigcap\limits_{0<r<i_X}\Big\{x\in X,\lim\limits_{k\raw \infty}\inf r^{4-2n}\int_{B_r(x)}e(A)(\cdot,t_k)\ dV_g\geq \vps_1\Big\}
\end{eqnarray*}
where $\vps_1$ is determined below.

For $x_1\in X\setminus \Sigma$, there exists $r_1>0$, such that when $t_k$ is sufficiently large, we have
\begin{equation*}
  r_1^{4-2n}\int_{B_{r_1}(x_1)}e(A)(\cdot,t_k)\ dV_g<\vps_1.
\end{equation*}

Set $s=t_k-r_1^2,\ \tau=t_k+r_1^2.$ Applying Lemma \ref{lmm2}, for any $t\in[s,\tau]$, we have
\begin{equation}\label{eq18}
\begin{split}
  \int_{B_{r_1/2}(x_1)}e(A)(\cdot, t)\ dV_g
&\leq \int_{B_{r_1}(x_1)}e(A)(\cdot,t_k)+2\int^{t_k+r_1^2}_{t_k-r_1^2}\int_X \Big| \ptA \Big|^2\ dV_gdt\\
&+C\left(\mbox{YM}(0)\int_{t_k-r_1^2}^{t_k+r_1^2}\int_X\Big|\ptA\Big|^2\ dV_gdt\right)^{1/2}\\
&+Ci_X\left(\mbox{YM}(0)\int_{t_k-r_1^2}^{t_k+r_1^2}\int_X\Big|\ptA\Big|^2\ dV_gdt \right)^{1/2}\\
&\leq \int_{B_{r_1}(x_1)}e(A)(\cdot,t_k)+2\epsilon+C(1+i_X)(\mbox{YM}(0)\epsilon)^{1/2}.
\end{split}
\end{equation}
Consider
\begin{equation}\label{eq19}
 r_1^{2-2n} \int_{P_{r_1/2}(x_1,t_k)}e(A)(\cdot,t)\ dV_gdt=r_1^{2-2n}\int_{t_k-(r_1/2)^2}^{t_k+(r_1/2)^2}\int_{B_{r_1/2}(x_1)}e(A)(\cdot,t)\ dV_gdt
\end{equation}
Substituting (\ref{eq18}) into (\ref{eq19}), we have
\begin{equation*}
  \begin{split}
  &r_1^{2-2n}\int_{t_k-(r_1/2)^2}^{t_k+(r_1/2)^2}\int_{B_{r_1/2}(x_1)}e(A)(\cdot,t)\ dV_gdt\\
\leq &r_1^{2-2n}\int_{t_k-(r_1/2)^2}^{t_k+(r_1/2)^2}\int_{B_{r_1}(x_1)}e(A)(\cdot,t_k)\ dV_gdt\\
&+Cr_1^{4-2n}\left(\epsilon+(1+i_X)(YM(0)\epsilon)^{1/2}\right)\\
=& \frac{1}{2}r_1^{4-2n}\int_{B_{r_1}(x_1)}e(A)(\cdot,t_k)\ dV_g+Cr_1^{4-2n}\left(\epsilon+(1+i_X)(YM(0)\epsilon)^{1/2}\right)
\end{split}
\end{equation*}
Choosing $\vps_1=\vps_0/4^{n-1}$ and $\epsilon$ such that $C2^{2n-2}r_1^{4-2n}(\epsilon+(1+i_X)(YM(0)\epsilon)^{1/2})\leq \vps_0/2 $, we have
\begin{equation*}
  \Big(\frac{r_1}{2}\Big)^{2-2n} \int_{P_{r_1/2}(x_1,t_k)}e(A)(\cdot,t)\ dV_gdt\leq \vps_0.
\end{equation*}
where $\vps_0$ is the constant in Theorem \ref{smallenergy}. Applying the small energy regularity theorem, we have
\begin{equation*}
  \sup\limits_{P_{\delta_0 r_1}(x_1,t_k)}e(A)(\cdot,\cdot)\leq C(\delta_0 r_1)^{-4},
\end{equation*}
where $\delta_0$ is the constant in Theorem \ref{smallenergy}.

It is easy to check that for any $x\in B_{\delta_0 r_1}(x_1)$, we can choose small enough $r_x$ such that $B_{r_x}(x)\subseteq B_{\delta_0 r_1}(x_1)$ and
\begin{equation*}
  \begin{split}
  r_x^{4-2n}\int_{B_{r_x}(x)}e(A)(\cdot,t_k)\leq r_x^{4-2n}r_x^{2n}C(\delta_0 r_1)^{-4}=C\left(\frac{r_x}{\delta_0 r_1}\right)^4<\vps_1,
\end{split}
\end{equation*}
 that is $B_{\delta_0 r_1}(x_1)\subseteq X\setminus \Sigma$ and $\Sigma$ is closed.

In fact $\mathcal{H}^{2n-4}(\Sigma)< \infty$. Since $\Sigma$ is closed, we have for any $\delta>0,$ there exist finite geodesic balls $\{B_{r_i}(x_i)\}_{i\in\Gamma}$, where $x_i\in \Sigma,\ r_i<\delta$, such that
\begin{itemize}
  \item $\Sigma\subseteq\cup_{i\in \Gamma} B_{r_i}$,
\item  when $ i\neq j$, $B_{r_i/2}(x_i)\cap B_{r_j/2}(x_j)=\emptyset$.
\end{itemize}
Since $x_i\in \Sigma$, we have
\begin{equation*}
r_i^{4-2n}\int_{B_{r_i/2}}e(A)(\cdot,t_k)\ dV_g>2^{2n-4}\vps_1,
\end{equation*}
for $t_k$ is sufficiently large. This implies
\begin{equation*}
  r_i^{2n-4}<\vps_1^{-1}2^{4-2n}\int_{B_{r_i/2}}e(A)(\cdot,t_k)\ dV_g.
\end{equation*}

Thus we have
\begin{equation*}
\sum_{i\in \Gamma}r_i^{2n-4}\leq \vps_1^{-1}2^{4-2n}\int_{\cup B_{r_i/2}}e(A)(\cdot,t_k)<  \vps_1^{-1}2^{4-2n}\mbox{YM(0)}<+\infty.
\end{equation*}
This implies that $\mathcal{H}^{2n-4}(\Sigma)\leq  \vps_1^{-1}2^{4-2n}\mbox{YM(0)}<\infty. $

\medskip

{ $Step 2$ The convergence in $X\setminus \Sigma$}

From the above, we have that for any $x_0\in X\setminus \Sigma,$ there exists $r_0$, when $t_k$ is sufficiently large, it holds that
$$\sup\limits_{P_{r_0}(x_0,t_k)}e(A)(\cdot,\cdot)\leq C.$$
Applying the Uhlenbeck weak compactness theorem (\cite{Uhlenbeck1}), there exists subsequence $\{t_{k'}\} \subseteq \{t_k\}$ and gauge transformations $\sigma(k')$ such that
$\sigma(k')(A(t_{k'}))$ converges to the connection $A_{\infty}$ of limiting bundle $(E_{\infty}, H_{\infty})$ in weak $W^{1,2}_{loc}(X\setminus \Sigma)$ sense and $A_{\infty}$ satisfies
\begin{equation*}
  D_{A_{\infty}}\lw F_{A_{\infty}}=0.
\end{equation*}

In fact, over $X\setminus \Sigma$, it holds that  $\sigma(k')(A(t_{k¡®}))$ converges to $A_{\infty}$ in $\mcc^{\infty}_{loc}$ sense.
For any $x_0\in X\setminus \Sigma$, there exists small enough $r_0$, such that when $t_k$ is sufficiently large, we have
\begin{equation*}
  \sup\limits_{P_{r_0}(x_0,t_{k})}|F_A|^2\leq C.
\end{equation*}
 Therefore, over $P_{r_0}(x_0,t_{k})$, we have
\begin{equation*}
\begin{split}
  (\tg-\ptt)|F_A|^2
&\geq 2|\nla_A F_A|^2-C(1+|Ric|+|Rm|+|F_A|)|F_A|^2-C|F_A||\nla_A F_A|\\
&\geq |\nla_A F_A|^2-C|F_A|^2.
\end{split}
\end{equation*}
Assume that there exist $r_j,\ \ j=0,\cdots, l-1$ such that
\begin{equation*}
  \sup\limits_{P_{r_j}(x_0, t_{k})}|\nla_A^j F_A|^2\leq C,
\end{equation*}
By a similar proof as that in Lemma \ref{bochner}, we have there exists $r_l$ such that
\begin{equation*}
  \begin{split}
  (\tg-\ptt)|\nla_A^l F_A|^2
&\geq 2|\nla_A^{l+1}F_A|^2-C|\nla^l_A F_A||\nla^{l+1}_A F_A|-C|\nla_A^l F_A|^2-C|\nla_A^l F_A | \\
&\geq |\nla^{l+1}_A F_A|^2-C|\nla_A^l F_A|^2-C.
\end{split}
\end{equation*}
in $P_{r_l}(x_0,t_{k})$.
This implies for any $j=1,\ \cdots,\ l$, we have
\begin{equation*}
  \int_{P_{r_j}(x_0,t_k)}|\nla^j_A F_A|^2\leq C.
\end{equation*}
From the parabolic mean value inequality, there exists $\delta>0$ such that
\begin{equation*}
  \sup\limits_{P_{\delta r_l}(x_0,t_k)}|\nla_A^{l}F_A|^2\leq C.
\end{equation*}
Using Donaldson's diagonal technique in \cite[Theorem ~ 4.4.8]{DonKro}, there exists subsequence $\{t_{k_i}\}$ and smooth gauge transformation $\{\sigma_{k_i}\}$ such that $\sigma_{k_i}(A(t_{k_i}))$ converges to $A_{\infty}$ in
$\mcc_{loc}^{\infty}(X\setminus \Sigma)$ sense and $A_{\infty}$ satisfies equation (\ref{eq17}).

$(2)$ Since $\sqt\lw F_{A_{\infty}}$ is parallel and $(\sqt\lw F_{A_{\infty}})^{*H_{\infty}}=\sqt\lw F_{A_{\infty}}$, we can decompose $(E_{\infty}, H_{\infty})$ according to the eigenvalues of  $\sqt\lw F_{\infty}$ over $X\setminus \Sigma$
\begin{equation*}
  E_{\infty}=\oplus_{i=1}^{l}E^i_{\infty}.
\end{equation*}

Setting $H^i_{\infty}=H_{\infty}|_{E^i_{\infty}}$,$A_{\infty}^i=A_{\infty}|_{E^i}$, we have $A_{\infty}^i$ is a Hermitian-Einstein connection on $(E_{\infty}^i, H_{\infty}^i)$, i.e.
\begin{equation*}
  \sqt\lw F_{A^i_{\infty}}=\lmd_{i}\mbox{Id}_{E_{\infty}^i}.
\end{equation*}
From Lemma \ref{lmm1}, we have $\rm{YM}(t)$ is decreasing along the flow. So it holds that
\begin{equation*}
  \int_{X\setminus \Sigma}|F_{A_{\infty}}|^2_{H_{\infty}}<\infty.
\end{equation*}
In addition that $\mathcal{H}^{2n-4}(\Sigma)<\infty$ and $H_{\infty}^i$ satisfies the Hermitian-Einstein equation, we have that every $(E_{\infty}^i, \bp_{A_{\infty}^i})$ can be extended to the whole $X$ as a reflexive sheaf (also denoted by $(E_{\infty}^i, \bp_{A_{\infty}^i})$) and $H_{\infty}^i$ can be smoothly extended over the place where the sheaf $(E_{\infty}^i,\bp_{A_{\infty}^i})$ is locally free by Theorem  \ref{extension}.
\qed


\begin{thebibliography}{10}

\bibitem{AtiyahBott}
M.Atiyah and R.Bott,
\newblock The Yang-Mills equations ovet Riemann surfaces,
\newblock Philos. Trans. Roy. Soc. London Ser. A 308 , no. 1505, pp. 523--615,1983.

\bibitem{bando}
S.Bando and Y.T.Siu,
\newblock Stable sheaves and Einstein-Hermitian metrics.
\newblock In: Geometry and analysis on complex manifolds, World Scientific Publishing, River Ed


\bibitem{chsh1}
Y.M,~Chen and C.L.~Shen,
\newblock Monotonicity formula and small action regularity for Yang-Mills flows in higher dimensions.
\newblock Calc. Var. Partial Differential Equations. 2(4), pp. 389--403, 1994.

\bibitem{chsh2}
Y.M,~Chen and C.L.~Shen,
\newblock Evolution problem of Yang-Mills flow over 4-dimensional manifold.
\newblock Variational methods in nonlinear analysis (Erice, 1992), pp. 63--66, Gordon and Breach, Basel, 1995.

\bibitem{chstr}
Y.M.~Chen and M.~Struwe,
\newblock Existence and partial regularity results for the heat flow for harmonic maps.
\newblock Math. Z. 201(1), pp. 83--103, 1989.

\bibitem{Daskalopous}
G.Daaskalopoulos,
\newblock The topology of the spaces of stable bundles on Riemann surface,
\newblock J. Differential Geom. 36, no. 3, pp. 699--746, 1992.

\bibitem{DasWent}
G.~Daskalopoulos and R.~Wentworth,
\newblock Convergence properties of the Yang-Mills flow on K\"ahler surfaces,
\newblock J. Reine Angew. Math. 575, pp.69--99, 2004.

\bibitem{demailly}
J.P.~Demailly,
\newblock{Complex analytic and differential geometry.}
book online  \href{https://www-fourier.ujf-grenoble.fr/~demailly/books.html}{https://www-fourier.ujf-grenoble.fr/~demailly/books.html}

\bibitem{Don}
S.K.Donaldson,
\newblock Anti self-dual Yang-Mills connections over complex algebraic surfaces and stable vector bundles.
\newblock   Proc. London Math. Soc. 50(1), pp. 1--26, 1985.

\bibitem{Don87}
S.K.~Donaldson,
\newblock Infinite determinants, stable bundles and curvature.
\newblock  Duke Math. J. 54(1), pp. 231--247, 1987.

\bibitem{DonKro}
S.K.~Donaldson and P.B.~Kronheimer,
\newblock The geometry of four-manifolds.
\newblock Clarendon Press, Oxford,1990.

\bibitem{Gaud}
P.~Gauduchon,
\newblock{La 1-forme de torsion d'une vari{\'e}t{\'e} hermitienne compacte.}
\newblock Math.Ann. 267(4), pp. 495--518, 1984.

\bibitem{hong2004asymptotical}
M.C.~Hong and G.~Tian,
\newblock Asymptotical behaviour of the Yang-Mills flow and singular Yang-Mills
  connections,
\newblock Math.Ann. 330(3),pp. 441--472,
  2004.

\bibitem{LY}
J.~Li and S.T. Yau,
\newblock Hermitian-Yang-Mills connection on non-K{\"a}hler manifolds.
\newblock In: Mathematical aspects of string theory,  World Scientific, pp. 560--573, 1987.

\bibitem{JY93}
J.Jost and S.T.Yau,
\newblock A nonlinear elliptic system for maps from Hermitian to Riemannian manifolds and rigidity theorems in Hermitian geometry.
\newblock  Acta Math. 170(2),pp. 221--254, 1993.


\bibitem{li2014}
J.Y.~Li and X.~Zhang,
\newblock The limit of the Yang-Mills-Higgs flow  on Higgs bundles,
\newblock Int. Math. Res. Not. 1, pp.232--276, 2017.

\bibitem{LZZ3} J.Li, C.J.Zhang and X.Zhang,
{\em The limit of the Hermitian-Yang-Mills flow  on reflexive sheaves},
\newblock Adv. Math. 325, pp.165--214, 2018.


\bibitem{MA}
M.~L\"ubke and A.~Teleman,
\newblock The Kobayashi-Hitchin correspondence.
\newblock  World Scientific Publishing, 1995.


\bibitem{Mcnamara}
J.~McNamara and Y.~Zhao,
 \newblock Limiting behavior of Donaldson's heat flow on non-K\"{a}hler surfaces.
\href{https://arxiv.org/abs/1403.8037} {arXiv:1403.8037.}


\bibitem{Sibley}
B.~Sibley,
\newblock Asymptotics of the Yang-Mills flow for holomorphic vector bundles over K?hler manifolds: the canonical structure of the limit.,
\newblock J. Reine Angew. Math. 706, 123--191,2015.


\bibitem{Uhlenbeck1}
K.K.~Uhlenbeck,
\newblock Connections with $L^p$ bounds on curvarure.
\newblock Comm. Math. Phys. 83(1), pp.31--42,1982.

\bibitem{Uhlenbeck2}
K.K.~Uhlenbeck,
\newblock A priori estimates for Yang-Mills fields,
\newblock unpublished manuscript.

\bibitem{UhYau}
K.K.~Uhlenbeck and S.T.~Yau,
\newblock On the existence of Hermitian-Yang-Mills connections in stable vector bundles.
\newblock  Comm. Pure Appl. Math. 39S, pp. S257--S293, 1986.

\bibitem{zhang05}
X.~Zhang,
\newblock Hermitian-Einstein metrics on holomorphic vector bundles over Hermitian manifolds.
\newblock J. Geom. Phys. 53(3), pp. 315--335, 2005.


  \end{thebibliography}
\end{document}